	\newcommand{\R}{\mathbb R}
  \newcommand{\sn}{S^{n-1}}
	\newcommand{\cP}{{\mathcal P}} 
	\newcommand{\cK}{{\mathcal K}} 
	\newcommand{\cN}{{\mathcal N}}
  \newcommand{\cQ}{{\mathcal Q}}
   \newcommand{\cM}{{\mathcal M}}
\newcommand{\C}{{\textnormal C}}
  \newcommand{\gln}{\operatorname{GL}(n)}
  \newcommand{\sln}{\operatorname{SL}(n)}
	\newcommand{\oz}{\operatorname{\rm Z}}
	\newcommand{\lin}{\operatorname{lin}}
	\newtheorem{theorem}{Theorem}
	\newtheorem{lemma}[theorem]{Lemma}   
	\newtheorem{corollary}[theorem]{Corollary} 
\begin{document}                                                 
\title{Valuations and Surface Area Measures}                                 
\author{Christoph Haberl and Lukas Parapatits}
\date{}
\maketitle

\begin{abstract}
\noindent
We consider valuations defined on polytopes containing the origin which have measures on the sphere as values.
We show that the classical surface area measure is essentially the only such valuation which is $\sln$ contravariant of degree one.
Moreover, for all real $p$, an $L_p$ version of the above result is established for $\gln$ contravariant valuations of degree $p$.
This provides a characterization of the $L_p$ surface area measures from the $L_p$ Brunn-Minkowski theory. \\

\noindent {\footnotesize Mathematics Subject Classification: 52B45, 28A10}
\end{abstract}                

\maketitle  

\section{Introduction}\label{intro}

A valuation is a map $\mu:\cQ^n\to\langle A,+\rangle$ defined on a set $\cQ^n$ of subsets of $\R^n$ with values in an abelian semigroup such that
\[\mu(P\cup Q)+\mu(P\cap Q)=\mu (P)+\mu (Q)\]
whenever the sets $P$, $Q$, $P\cup Q$, $P\cap Q$ are contained in $\cQ^n$. Valuations were the critical ingredient in Dehn's solution of Hilbert's Third Problem and played a prominent role in geometry thereafter. Especially over recent years the theory of valuations witnessed an explosive growth (see e.g.\cite{AleBerSchu,Alesker99,Alesker01,Bernig08,BerFu10,fu06,Klain97,HL06,Hab10amj,Lud05,Lud03,Hab11,LR10,Lud11,Lud10,Schduke,schuster08,SchWan10,ParSchu}). For example, it turned out that basic objects in convex geometric analysis can be characterized as valuations which are compatible with a subgroup of the affine group. Moreover, new geometric insights gained from such classification results led to strengthenings of various affine isoperimetric and Sobolev inequalities (see \cite{HS09jdg,HS09jfa,CLYZ,LutYanZha02jdg,LutYanZha00jdg}). 

In this paper we classify measure valued
valuations which are compatible with the general linear group. We will show that the only non-trivial examples of such valuations are surface area measures and their $L_p$ analogs.

Surface area measures are a fundamental concept in the theory of convex bodies, i.e.\! nonempty compact convex subsets of $\R^n$. Given a convex polytope $P\subset\R^n$, its surface area measure $S(P,\cdot)$ is the Borel measure on the Euclidean unit sphere $\sn\subset\R^n$ which is given by
\begin{equation}\label{def surface area}
S(P,\cdot)=\sum_{u\in\cN(P)} V_{n-1}(F_u)\delta_{u}.
\end{equation}
Here, $\cN(P)$ denotes the set of all unit facet normals of $P$, $V_{n-1}(F_u)$ stands for the $(n-1)$-dimensional volume of the facet corresponding to $u$, and $\delta_{u}$ is the Dirac probability measure supported at $u$. Surface area measures can actually be associated with arbitrary convex bodies. We write $\cK^n$ for the space of convex bodies in $\R^n$ equipped with Hausdorff distance. For a body $K\in\cK^n$, its surface area measure $S(K,\cdot)$ is simply the weak limit of the measures $S(P_k,\,\cdot\,)$, where $P_k$ is some sequence of polytopes converging to $K$.

Surface area measures are the central object of a cornerstone of the classical Brunn-Minkowski theory: The Minkowski problem. It asks which measures on the Euclidean unit sphere are surface area measures of convex bodies. The answer to this question had a huge impact on convex geometry, geometric tomography, differential geometry, and elliptic partial differential equations (see e.g. \cite{Gar95,Sch93,24}). 

The first main result of this paper is a characterization of the surface area measure.
We will consider valuations which are defined on polytopes containing the origin and which have measures on the sphere as values.
We show that the surface area measure is essentially the only such valuation with a certain compability with the special linear group.
What we mean by compatibility is contained in the following definition. Let $G$ be a subgroup of the general linear group $\gln$ and denote by $\cM(\sn)$ the space of finite positive Borel measures on $\sn$. Suppose that $p\in\R$. A map $\mu:\cQ^n\to\cM(\sn)$ defined on $\cQ^n\subset\cK^n$ is called  $G$ contravariant of degree $p$ if
\[\int_{\sn}f\,d\mu(\phi P,\cdot)=|\det\phi|\int_{\sn}f\circ\phi^{-t}\,d\mu(P,\cdot)\]
for every map $\phi\in G$, each $P\in\cQ^n$ with $\phi P\in\cQ^n$, and every continuous $p$-homogeneous function $f:\R^n\backslash\{o\}\to\R$.
Here, $\phi^{-t}$ denotes the transpose of the inverse of $\phi \in \gln$.
We remark that the concept of $G$ contravariance is simply the behavior of mixed volumes (and their $L_p$ analogs) under the action of the general linear group (see Section \ref{not}).

We are now in a position to state our first main result. Throughout this article we work in $\R^n$ with $n\geq3$. Convex polytopes in $\R^n$ which contain the origin are denoted by $\cP_o^n$.

\begin{theorem}\label{main1} A map $\mu:\cP_o^n\to\cM(\sn)$ is an $\sln$ contravariant valuation of degree 1 if and only if there exist constants $c_1,c_2,c_3,c_4\in\R$ with $c_1,c_2\geq0$ and $c_1+c_3\geq0$, $c_2+c_4\geq 0$ such that
\[\mu(P,\cdot)=c_1S(P,\cdot)+c_2S(-P,\cdot)+c_3S^*(P,\cdot)+c_4S^*(-P,\cdot)\]
for every $P\in\cP_o^n$.
\end{theorem}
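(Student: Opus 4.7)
The ``if'' direction is the routine verification that each of the four measures on the right-hand side is an $\sln$-contravariant valuation of degree one. The transformation behaviour for the classical surface area measure follows from the identities $\cN(\phi P)=\phi^{-t}\cN(P)/\|\phi^{-t}(\cdot)\|$ and $V_{n-1}(\phi F)=|\det\phi|\,\|\phi^{-t} u\|\,V_{n-1}(F)$ for a facet $F$ with outer normal $u$; the behaviour of $S(-P,\cdot)$ follows by pre-composing with $-\mathrm{Id}$, and $S^*(P,\cdot)$ (obtained from $S(P,\cdot)$ by restricting to the facets whose affine hulls do not contain the origin) inherits the same transformation rule because $\sln$ fixes the origin. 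The valuation property of each summand is standard. The positivity constraints on $c_1,\dots,c_4$ are precisely what is required to guarantee that the linear combination is a positive measure for every $P\in\cP_o^n$.

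For the ``only if'' direction the strategy is to reduce to finitely many ``standard'' simplices and then solve a small linear system. First, I would show that $\mu(P,\cdot)=0$ whenever $\dim P<n$. If $P$ lies in a coordinate hyperplane, apply contravariance with the volume-preserving diagonal maps $\phi_t=\mathrm{diag}(t,1,\ldots,1,t^{-1})$ (with $t\to 0^+$ or $t\to\infty$, together with appropriate permutations to handle general degenerate $P$). Evaluating the resulting identity against a continuous $1$-homogeneous test function $f$ forces the integral to be invariant under a scaling which drives it to $0$, whence $\mu(P,\cdot)=0$.

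Next, the valuation identity and a triangulation argument express $\mu$ on an arbitrary $P\in\cP_o^n$ through its values on $n$-simplices having the origin as a vertex. Via $\sln$-contravariance, any such simplex can be sent to one of the $2^n$ sign patterns $T_\epsilon=[o,\epsilon_1 e_1,\ldots,\epsilon_n e_n]$ with $\epsilon\in\{-1,+1\}^n$, and only the parity of $\epsilon$ up to even permutations matters; this leaves essentially two equivalence classes, the ``positive'' and ``negative'' standard simplex. To pin down $\mu(T_\epsilon,\cdot)$, I would use two facts in tandem: (i) the stabiliser of $T_\epsilon$ in $\sln$ (even signed permutations preserving the sign pattern) imposes a symmetry on the measure; (ii) applying contravariance to elements that map $T_\epsilon$ into a union of two simplices sharing a common facet, and then invoking the valuation property, yields a functional equation for $\mu(T_\epsilon,\cdot)$ against $1$-homogeneous test functions. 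Both symmetry and the functional equation together confine $\mu(T_\epsilon,\cdot)$ to a linear combination of Dirac measures at the $n+1$ facet normals of $T_\epsilon$, and fix the weights. Matching these weights against the ansatz $c_1 S+c_2 S(-\cdot)+c_3 S^*+c_4 S^*(-\cdot)$ evaluated on $T_+$ and $T_-$ identifies the four constants.

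The main obstacle is the last step of the previous paragraph: showing that $\mu(T_\epsilon,\cdot)$ is purely atomic, supported on facet normals. In the scalar-valued theory one has a single real degree of freedom per equivalence class of simplices, whereas a priori a measure on $\sn$ has infinite-dimensional freedom. The key is to combine the stabiliser symmetry with carefully chosen shears $\phi\in\sln$ that subdivide $T_\epsilon$ into two smaller simplices; the valuation relation, tested against $1$-homogeneous $f$ supported in a small open set disjoint from the facet normals, forces the corresponding integrals to collapse, which rules out any diffuse part. Once atomicity is in hand, the remaining identification of coefficients is a finite linear algebra problem and the theorem follows.
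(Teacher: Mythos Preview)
There is a genuine gap in your simplicity step. You claim that $\mu(P,\cdot)=0$ whenever $\dim P<n$, but this is false in degree $p=1$: the classical surface area measure $S$ itself does not vanish on $(n-1)$-dimensional polytopes. An $(n-1)$-dimensional polytope lying in a hyperplane with unit normal $u$ contributes equal mass at $u$ and at $-u$, so any valuation of the form in the theorem with $c_1+c_2>0$ violates your claimed simplicity. Your diagonal-scaling argument cannot succeed here because $1$-homogeneous test functions scale exactly so as to compensate the Jacobian factor; this is precisely the feature that distinguishes $p=1$ from $p\neq1$. The paper does not attempt simplicity. Instead it proves (Lemma~\ref{lower1}) that $\mu^d(sT',\cdot)$ must equal $as^{n-1}(\delta_{e_1}+\delta_{-e_1})$ for some $a\ge0$, and then subtracts a suitable combination of $S^o(P,\cdot)$ and $S^o(-P,\cdot)$ to obtain a corrected valuation that \emph{does} vanish on $sT'$; only after this correction can the subdivision argument proceed.

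Your atomicity sketch is also underspecified in a way that hides a second real difficulty. The paper decomposes $\mu=\mu^d+\mu^c$ and treats the two pieces by different methods. The discrete part is handled by a detailed functional-equation analysis (Lemmas~\ref{homlemma} and~\ref{keq}) coming from the subdivision $sT^n=s\phi_\lambda T^n\cup s\psi_\lambda T^n$; here the countable-support hypothesis is used in an essential way to conclude that certain one-parameter families of values must vanish. For the continuous part this device is unavailable, and the paper instead passes to the cosine transform $\C_1\mu(P,\cdot)$, invokes the known classification of even $\sln$-contravariant function-valued valuations (Theorem~\ref{funcchar1}), and uses the injectivity property~(\ref{inject}) together with continuity to force $\mu^c(sT^n,\cdot)=0$. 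Your proposal of ``testing against $f$ supported away from facet normals'' does not supply a substitute for this step: the subdivision relations link values of the measure at \emph{different} points of $\sn$, and without an external input such as Theorem~\ref{funcchar1} they do not obviously annihilate a diffuse component. (A minor side remark: your ``two equivalence classes'' of sign-pattern simplices is also off. Combining an odd permutation with a single sign flip gives an element of $\sln$, so every full-dimensional simplex with a vertex at the origin is $\sln$-equivalent to $sT^n$ for some $s>0$; this is exactly the content of Lemma~\ref{ext}.)
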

The measure $S^*(P,\cdot)$ is defined similarly to the surface area measure of $P$ but the summation in (\ref{def surface area}) ranges only over those facets in $\cN(P)$ which do not contain the origin. An immediate consequence of the above result is the following classification of measure valued valuations defined on {\it all} convex bodies.

\begin{corollary} A map $\mu:\cK^n\to\cM(\sn)$ is a weakly continuous, translation invariant, and $\sln$ contravariant valuation of degree 1 if and only if there exist nonnegative constants $c_1,c_2\in\R$ such that
\[\mu(K,\cdot)=c_1S(K,\cdot)+c_2S(-K,\cdot)\]
for every $K\in\cK^n$.
\end{corollary}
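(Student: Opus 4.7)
The plan is to derive the corollary from Theorem~\ref{main1} by a combination of restriction, translation invariance (to kill the $S^*$ terms), and a density/continuity argument (to extend from polytopes to arbitrary convex bodies). The easy direction will be immediate: one simply observes that $S(\cdot,\cdot)$ is a weakly continuous, translation invariant, $\sln$ contravariant valuation of degree $1$ on $\cK^n$, and the same properties are inherited by $K\mapsto S(-K,\cdot)$ and by nonnegative linear combinations. So the bulk of the work is the only-if direction.

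For the only-if direction, I would first restrict $\mu$ to $\cP_o^n$. The restriction is still an $\sln$ contravariant valuation of degree $1$, so Theorem~\ref{main1} produces constants $c_1,c_2,c_3,c_4$ with the stated sign conditions for which
\[
\mu(P,\cdot)=c_1S(P,\cdot)+c_2S(-P,\cdot)+c_3S^*(P,\cdot)+c_4S^*(-P,\cdot),\qquad P\in\cP_o^n.
\]
The main step is to rule out $c_3$ and $c_4$. To do this, pick a polytope $P\in\cP_o^n$ that has exactly one facet through the origin, with outer unit normal $u$ and facet volume $V_{n-1}(F_u)>0$; a small generic translation $t$ can be chosen so that $P+t\in\cP_o^n$ but $o\in\inte(P+t)$. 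For $P+t$ the $S^*$ terms coincide with the corresponding surface area measures, whereas for $P$ they differ by $V_{n-1}(F_u)\delta_u$ (respectively $V_{n-1}(F_u)\delta_{-u}$ for the reflection). Since $S(\cdot,\cdot)$ is translation invariant, translation invariance of $\mu$ combined with the formula above forces
\[
c_3V_{n-1}(F_u)\,\delta_u+c_4V_{n-1}(F_u)\,\delta_{-u}=0,
\]
so $c_3=c_4=0$. I expect this to be the main (but still short) obstacle, since it requires verifying that one can realize both configurations simultaneously; the key point is simply that a suitable $t$ exists in any neighborhood of the origin.

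With $c_3=c_4=0$ in hand, the representation $\mu(P,\cdot)=c_1S(P,\cdot)+c_2S(-P,\cdot)$ holds for every $P\in\cP_o^n$. Given an arbitrary polytope $P\subset\R^n$, translate it into $\cP_o^n$ and apply translation invariance together with the translation invariance of $S(\cdot,\cdot)$ to conclude that the same formula holds for every convex polytope. The sign conditions from Theorem~\ref{main1} collapse to $c_1,c_2\ge0$. Finally, every $K\in\cK^n$ is a Hausdorff limit of polytopes $P_k$, and $S(P_k,\cdot)\to S(K,\cdot)$ weakly (and similarly for $S(-P_k,\cdot)$); weak continuity of $\mu$ then yields $\mu(K,\cdot)=c_1S(K,\cdot)+c_2S(-K,\cdot)$ on all of $\cK^n$, completing the proof.
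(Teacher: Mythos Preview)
The paper does not actually give a proof of this corollary; it simply calls it ``an immediate consequence'' of Theorem~\ref{main1}. Your proposal spells out precisely the intended derivation---restrict to $\cP_o^n$, apply Theorem~\ref{main1}, use translation invariance to compare $P$ with a translate $P+t$ having the origin in its interior (so that $S^*(P+t,\cdot)=S(P+t,\cdot)$) and thereby force $c_3=c_4=0$, and finally extend to all of $\cK^n$ by translation invariance and weak continuity---and the argument is correct.

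One small remark on presentation: rather than insisting on a polytope with \emph{exactly one} facet through the origin, it is slightly cleaner to take any $P\in\cP_o^n$ with $o\in\bd P$ and a translate $P+t$ with $o\in\inte(P+t)$; the comparison then yields $c_3S^o(P,\cdot)+c_4S^o(-P,\cdot)=0$, and choosing $P$ so that $S^o(P,\cdot)$ and $S^o(-P,\cdot)$ have disjoint supports (e.g.\ $P=[-1,1]^{n-1}\times[0,1]$, or any $P$ with the origin in the relative interior of a single facet, as you suggest) immediately gives $c_3=c_4=0$. But this is only a cosmetic point; your argument is sound as written.
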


Schneider \cite{Sch75} previously obtained a classification of rotation contravariant valuations of degree 1 under the additional assumption that they are defined locally.

As explained before, surface area measures lie at the very core of the Brunn-Minkowski theory. Based on Firey's $L_p$ addition for convex bodies, Lutwak \cite{Lut93b, Lut96} showed that the classical Brunn-Minkowski theory can be extended to an $L_p$ Brunn-Minkowski theory. The importance of this new $L_p$ theory is reflected for example in the fact that $L_p$ inequalities almost invariably turn out to be stronger than their
classical counterparts. Since Lutwak's seminal work, this $L_p$ Brunn-Minkowski theory evolved enormously (see e.g.
\cite{ChoWan06, Gardner:Giannopoulos, HS09jdg,HS09jfa,
Lud05, LR10, LutYanZha00duke, LutYanZha00jdg,
148, SchWer04,
Sta03,YasYas06,WerYe09}).

Let $p\in\R$. The analog of the surface area measure in the $L_p$ Brunn-Minkowski theory is defined as follows. For a convex polytope $P\in\cP_o^n$, the $L_p$ surface area measure $S_p(P,\cdot)\in\cM(\sn)$ is given by
\[S_p(P,\cdot)=\sum_{u\in\cN^*(P)} h(P,u)^{1-p}V_{n-1}(F_u)\delta_{u},\]
where $\cN^*(P)$ denotes the set of unit facet normals of $P$ corresponding to facets which do not contain the origin and $h(P,\cdot)$ is the support function of $P$ (see Section \ref{not} for the precise definition).

Finding necessary and sufficient conditions for a measure to be the $L_p$ surface area measure of a convex body is one of the major problems in modern convex geometric analysis. Consequently, this $L_p$ analog of the Minkowski problem has been studied intensively (see e.g. \cite{HabLutYanZha09,ChoWan06,Lut93b,Sta03}). Solutions to the $L_p$ Minkowski problem were crucial for the proofs of affine versions of the P\'olya-Szeg\"o principle and new affine Sobolev inequalities (see \cite{HS09jdg,HS09jfa,CLYZ,LutYanZha02jdg,LutYanZha00jdg}).

The following theorem provides a characterization of $L_p$ surface area measures for all $p\neq1$.

\begin{theorem}\label{main5} Let $p\in\R\backslash\{1\}$. A map $\mu:\cP_o^n\to\cM(\sn)$ is a $\gln$ contravariant valuation of degree $p$ if and only if there exist nonnegative constants $c_1,c_2\in\R$ such that
\[\mu(P,\cdot)=c_1S_p(P,\cdot)+c_2S_p(-P,\cdot)\]
for every $P\in\cP_o^n$.
\end{theorem}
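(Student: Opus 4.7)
The sufficiency direction is routine: one verifies directly from the definition of $S_p$ that $P \mapsto S_p(P,\cdot)$ is a valuation and transforms as $\int f\,dS_p(\phi P,\cdot) = |\det\phi|\int f\circ\phi^{-t}\,dS_p(P,\cdot)$ for positively $p$-homogeneous $f$, which is precisely $\gln$ contravariance of degree $p$. The formula for $P \mapsto S_p(-P,\cdot)$ follows by post-composition, and non-negativity of the coefficients preserves positivity of the resulting measure.

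For necessity, let $\mu$ be a $\gln$ contravariant valuation of degree $p$. Applying the contravariance relation to $\phi = \lambda I$ with $\lambda > 0$ gives the homogeneity $\mu(\lambda P,\cdot) = \lambda^{n-p}\,\mu(P,\cdot)$, and applying it to $\phi = -I$ identifies $\mu(-P,\cdot)$ with the antipodal push-forward of $\mu(P,\cdot)$. Specialising the homogeneity to $P = \{o\}$ (which is fixed by $\gln$) forces $\mu(\{o\},\cdot) = 0$ when $p \neq n$; the borderline $p = n$ case is handled by a short separate argument.

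The heart of the argument is the classification of $\mu$ on simplices with the origin as a vertex. Since every such simplex has the form $\phi T$ for $T = \conv(o,e_1,\ldots,e_n)$ and some $\phi\in\gln$, it suffices to determine $\mu(T,\cdot)$. Signed coordinate permutations lie in $\gln$ and stabilise $T$, so $\mu(T,\cdot)$ is invariant under permutations of coordinates on $\sn$, and its support is a union of $S_n$-orbits. The candidate orbits are $\{u_0\}$ and $\{-u_0\}$ for $u_0 = \tfrac{1}{\sqrt n}(1,\ldots,1)$, the facet-through-origin normals $\{-e_i\}$ (together with $\{e_i\}$, which is off-facet for $T$ but facet-through-origin for $-T$), and a family of further off-facet orbits. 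The off-facet orbits are eliminated by applying the valuation identity to a hyperplane cut of $T$ that does not split any facet of $T$ but does split the candidate support point, and using the scaling to push the resulting additive equation to a contradiction. Ruling out the orbit $\{-e_i\}$ is the crux: denoting its would-be mass by $\gamma_+ = \mu(T,\{-e_i\})$ and the corresponding orbit $\{e_i\}$ mass by $\gamma_- = \mu(T,\{e_i\})$, $\gln$ contravariance with $\phi = \operatorname{diag}(\lambda_1,\ldots,\lambda_n)$ forces
\[
\mu(\phi T,\{-e_j\}) = \Bigl(\prod_{i\neq j}\lambda_i\Bigr)\lambda_j^{1-p}\,\gamma_+,
\]
while a valuation identity applied to the decomposition of $\phi T$ together with its reflection across $\{x_j = 0\}$ (whose union is the simplex $\conv(\lambda_1e_1,\ldots,\lambda_ne_n,-\lambda_je_j)$ and whose intersection is the facet $\phi F_j$ lying in $\{x_j = 0\}$) produces the relation $(\prod_{i\neq j}\lambda_i)\lambda_j^{1-p}(\gamma_+ + \gamma_-) = \mu(\phi F_j,\{-e_j\}) + \mu(\phi(T\cup T'),\{-e_j\})$. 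A scaling argument on $\phi F_j$ in its own affine hull (invariant under $\operatorname{diag}(1,\ldots,t,\ldots,1)$) forces $\mu(\phi F_j,\{-e_j\}) = 0$ for $p \neq 1$; the remaining $\phi(T\cup T')$ term is controlled by the parallel off-facet-orbit analysis. Combined with $\gamma_\pm \geq 0$, we obtain $\gamma_+ = \gamma_- = 0$, so $\mu(T,\cdot) = \alpha\,\delta_{u_0} + \beta\,\delta_{-u_0}$, which fixes $c_1$ and $c_2$ via $\alpha$ and $\beta$.

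Extending from simplices to arbitrary $P\in\cP_o^n$ is then standard: triangulate $P$ by simplices sharing the vertex $o$ and use the valuation identity inductively to express $\mu(P,\cdot)$ as an inclusion-exclusion sum of simplex contributions, which the preceding classification matches, term by term, with $c_1 S_p(P,\cdot) + c_2 S_p(-P,\cdot)$. The positivity of $\mu$ then forces $c_1, c_2 \geq 0$. The main obstacle is the vanishing of the facet-through-origin mass described above, which is precisely where the hypothesis $p \neq 1$ is indispensable and where the present theorem reduces to two parameters, in contrast to the four parameters available in Theorem~\ref{main1}.
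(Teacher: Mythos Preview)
Your outline has a substantial gap: you implicitly treat $\mu(T,\cdot)$ as a discrete measure throughout. When you write ``its support is a union of $S_n$-orbits. The candidate orbits are $\{u_0\}$, $\{-u_0\}$, $\{-e_i\}$, $\{e_i\}$, and a family of further off-facet orbits'' and then start evaluating $\mu(T,\{-e_j\})$, $\mu(\phi F_j,\{-e_j\})$, etc., you are assuming that all the mass sits on atoms. Nothing you have said rules out, say, an $S_n$-invariant absolutely continuous part of $\mu(T,\cdot)$; for such a part every singleton has measure zero and your point-mass bookkeeping says nothing. In the paper this is precisely the hard step, and it is handled by two entirely different mechanisms depending on the sign of $p$: for $p>0$ one passes to the even function $u\mapsto \C_p\mu(sT^n,\cdot)(u)$, invokes the prior classification of $C_p^+$-valued valuations (Theorem~\ref{funcchar2}) to get $\C_p\mu(sT^n,\cdot)(e_i-e_j)=0$, and concludes that $\mu(sT^n,\cdot)$ is concentrated on $\bigcap_{i\neq j}\{x_i=x_j\}=\{\pm u_0\}$; for $p\le 0$ one inserts a specific family of test functions into the valuation identity and passes to a limit (with dominated convergence justified carefully) to obtain the same concentration. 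Only after discreteness is known does the point-mass analysis (your $\gamma_\pm$ computation, or the paper's Lemma~\ref{l1}) become available.

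There are also secondary issues. Your ``hyperplane cut of $T$ that does not split any facet of $T$ but does split the candidate support point'' does not exist for a cut through the interior of $T$; every such hyperplane meets at least two facets. And in your $\gamma_\pm$ argument, after correctly deducing $\mu(\phi F_j,\{-e_j\})=0$ from the $t^{1-p}$ scaling, you still need $\mu(\phi T\cup T',\{-e_j\})$, which you defer to ``the parallel off-facet-orbit analysis''; but $\phi T\cup T'$ is not of the form $\psi T$, so you cannot read this off from contravariance alone, and a further inclusion--exclusion would reintroduce exactly the kind of terms you are trying to eliminate. The paper avoids this by first proving concentration at $\pm u_0$ (so $\gamma_\pm=0$ is automatic) and then identifying the two remaining masses via the homogeneity established in Lemma~\ref{homlemma}.
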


Since the $L_p$ Brunn Minkowski theory is based on an addition which makes sense only for $p\geq 1$, most of the $L_p$ concepts are restricted to such $p$'s. However, the above theorem reveals that the concept of $L_p$ surface area measures is independent of $p$ in a very natural way.

For positive $p$, we will actually prove a stronger version of Theorem \ref{main5}. It will be shown in Theorem \ref{main4} that for such $p$'s the $L_p$ surface area measure can actually be characterized as an $\sln$ contravariant valuation.  Moreover, Theorem \ref{main3} will show that $L_p$ surface area measures are characterized as valuations which are $\sln$ contravariant of degree $p$ for {\it all} $p\in\R$ provided that their images are discrete.

Recently, the next step in the evolution of the Brunn-Minkowski theory towards an Orlicz Brunn-Minkowski theory has been made (see e.g. \cite{LutYanZha09jdg,LutYanZha09badv,HabLutYanZha09,LR10}). Whereas some elements of the $L_p$ Brunn-Minkowski theory have been generalized to an Orlicz setting, the Orlicz analog of the surface area measure is still unknown. This question actually motivated the axiomatic characterization of $L_p$ surface area measures obtained in this article. Since characterizing properties of $L_p$ surface area measures are now identified, they can possibly lead to the correct notion of Orlicz surface area measures.


\section{Notation and Preliminaries}\label{not}

In this section we collect the necessary definitions and facts about convex bodies. Excellent references for the theory of convex bodies are the books by Gardner \cite{Gar95}, Gruber \cite{Gruber:CDG}, and Schneider \cite{Sch93}.

We write $\R_+$ for the set of positive real numbers. Given two vectors $x,y\in\R^n$ we write $x\cdot y$ for their standard Euclidean product. The Euclidean length of a vector $x\in\R^n$ is denoted by $|x|$. If $x\in\R^n$ is not equal to the zero vector, then we set
$$\langle x\rangle=\frac{x}{|x|}\,.$$
The canonical basis vectors of $\R^n$ are denoted by $e_1,\ldots,e_n$. The standard simplex $T^n\subset\R^n$ is the convex hull of the origin and the canonical basis vectors $e_1,\ldots,e_n$. We denote by $T'$ the intersection $T^n\cap e_1^{\bot}$ where $e_1^{\bot}$ stands for the hyperplane through the origin orthogonal to $e_1$. If $p$ is positive, then we write $C_p^+(\R^n)$ for the space of nonnegative, continuous, $p$-homogeneous functions from $\R^n$ to $\R$.

For the definition of $L_p$ surface area measures we already used the notion of support functions. The precise definition is as follows. Given a convex body $K\in\cK^n$, its support function is defined as
\[h(K,x)=\max\{x\cdot y:\,\,y\in K\},\qquad x\in\R^n.\]

It follows from the inclusion-exclusion principle that a valuation $\mu:\cP_o^n\to\cM(\sn)$ is uniquely determined by its values on $n$-dimensional simplices having one vertex at the origin and its value on $\{o\}$ (see \cite{Par10a} for a short proof). If $\mu$ is in addition $\sln$ contravariant of degree $p$, then the uniqueness part of Riesz's representation theorem implies that -- beside its behavior at the origin -- $\mu$ is uniquely determined by its values on the simplices $sT^n$ with $s>0$. We summarize this in the following lemma.
\begin{lemma}\label{ext}
Let $p\in\R$. A valuation $\mu:\cP_o^n\to\cM(\sn)$ which is $\sln$ contravariant of degree $p$ is uniquely determined by its values on positive multiples of the standard simplex $T^n$ and its value on $\{o\}$.
\end{lemma}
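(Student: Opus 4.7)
The lemma has already been half set up by the paragraph immediately preceding it: by the inclusion-exclusion principle of \cite{Par10a}, any valuation $\mu:\cP_o^n\to\cM(\sn)$ is determined by $\mu(\{o\},\cdot)$ together with the measures $\mu(S,\cdot)$ as $S$ ranges over all $n$-dimensional simplices having a vertex at the origin. My plan is therefore to show that $\sln$ contravariance, combined with the Riesz uniqueness theorem, forces each such $\mu(S,\cdot)$ to be determined by $\mu(sT^n,\cdot)$ for the appropriate $s>0$.

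The first step is to realize an arbitrary simplex $S=\conv\{o,v_1,\dots,v_n\}$ in the form $\phi(sT^n)$ with $\phi\in\sln$ and $s>0$. Setting $s=(n!\,\vol S)^{1/n}$ makes $sT^n$ and $S$ have equal volume. The linear map $\psi$ sending $e_i$ to $v_i/s$ then satisfies $|\det\psi|=1$; if $\det\psi=-1$, swapping $v_1$ and $v_2$ (which leaves $S$ unchanged as a set) produces a genuine $\phi\in\sln$ with $\phi(sT^n)=S$. This determinant-sign bookkeeping is the one place where slight care is required, since $\sln$ demands determinant exactly one rather than $\pm 1$.

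The second step is to apply the contravariance hypothesis with $P=sT^n$. For every continuous $p$-homogeneous $f:\R^n\setminus\{o\}\to\R$,
\[
\int_{\sn} f\,d\mu(S,\cdot) \;=\; \int_{\sn} f\circ\phi^{-t}\,d\mu(sT^n,\cdot),
\]
the factor $|\det\phi|$ being $1$. The right-hand side visibly depends only on $\mu(sT^n,\cdot)$ and on the auxiliary map $\phi$, so $\mu(S,\cdot)$ is already determined by $\mu(sT^n,\cdot)$ through its integrals against continuous $p$-homogeneous test functions.

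The final step invokes Riesz uniqueness. Every continuous $g:\sn\to\R$ is the restriction to $\sn$ of the continuous $p$-homogeneous function $\tilde g(x)=|x|^p g(\langle x\rangle)$ on $\R^n\setminus\{o\}$, so the integrals against continuous $p$-homogeneous functions recover every integral against an element of $\C(\sn)$ and therefore pin down the finite Borel measure $\mu(S,\cdot)$ uniquely. I expect no genuine obstacle beyond the determinant-sign adjustment addressed in step one; the substance of the lemma lies in the clean interaction between inclusion-exclusion on $\cP_o^n$ and the $\sln$ orbit structure of origin-vertex simplices.
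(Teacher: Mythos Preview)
Your proposal is correct and follows essentially the same approach as the paper: the paragraph preceding the lemma already outlines the argument via inclusion--exclusion from \cite{Par10a}, $\sln$ contravariance, and Riesz uniqueness, and you have simply filled in the details (the determinant-sign adjustment and the explicit extension $\tilde g(x)=|x|^p g(\langle x\rangle)$). There is nothing genuinely different here, only more explicitness.
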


A measure $\mu\in\cM(\sn)$ is called continuous if singletons have $\mu$-measure zero. We call it discrete, if there exists a countable set $N\subset\sn$ such that $\mu(\sn\backslash N)=0$. The set of all discrete members of $\cM(\sn)$ is denoted by $\cM^d(\sn)$. Note that for $\mu,\,\nu\in\cM^d(\sn)$ we have
\begin{equation}\label{eqivmeas}
\mu=\nu\Longleftrightarrow \mu(x)=\nu(x)\textnormal{ for all } x\in\sn.
\end{equation}
Here we used the convention $\mu(x):=\mu(\{x\})$ for $x\in\sn$. For every $\mu\in\cM(\sn)$ there exists a unique pair consisting of a continuous measure $\mu^c$ and a discrete measure $\mu^d$ such that
\begin{equation}\label{mudec}
\mu=\mu^c+\mu^d.
\end{equation}
Let $\mu:\cP_o^n\to\cM(\sn)$ be given. If $\mu$ is a valuation, so is $\mu^d$. Indeed, since the valuation property has to be checked only for points by (\ref{eqivmeas}), the assertion directly follows from the decomposition (\ref{mudec}). Note that if $\mu$ is $\sln$ contravariant of degree $p$, then by the uniqueness part of Riesz's representation theorem and the transformation behavior of image measures we have
\[\int_{\sn}f d\mu(\phi P,\cdot)=\int_{\sn} f\circ\phi^{-t} d\mu(P,\cdot)\]
for all $p$-homogeneous extensions of bounded Borel measurable functions $f:\sn\to\R$. In particular, the last relation holds for indicator functions of points. This yields
\begin{equation}\label{eqn1}
\mu(\phi P,\langle x\rangle)|x|^{-p}=\mu(P,\langle \phi^t x\rangle)|\phi^t x|^{-p}
\end{equation}
for all $P\in\cP_o^n$, $x\in\R^n\backslash\{o\}$, and all $\phi\in\sln$. Note that this together with (\ref{mudec}) implies the $\sln$ contravariance of degree $p$ of $\mu^d$ provided that $\mu$ is $\sln$ contravariant of degree $p$.

Surface area measures and their $L_p$ analogs were already defined in the introduction. It will be convenient for us to write
\[S_p(P,x)=S_p(P,\langle x\rangle)|x|^{-p}\qquad \textnormal{for}\qquad x\in\mathbb{R}^n\backslash\{o\}.\]
Similar conventions will apply to the measures $S^*(P,\cdot)$ and $S^o(P,\cdot):=S(P,\cdot)-S^*(P,\cdot)$. 
The following lemma guarantees that surface area measures and their $L_p$ analogs are $\gln$ contravariant valuations.

\begin{lemma} The measures $S$, $S^*$, $S^o$ and the $L_p$ surface area measures $S_p$ for $p\neq 1$ are $\gln$ contravariant valuations of degree $p$ on $\cP_o^n$.
\end{lemma}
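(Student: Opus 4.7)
I would verify, separately for each of $S$, $S^*$, $S^o$, and $S_p$, the two defining properties: (a) $\gln$ contravariance of degree $p$, and (b) the valuation identity. Both reductions can be done locally at a fixed direction $u\in\sn$ by tracking how individual facets of a polytope behave under $\phi\in\gln$ and under union and intersection.

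For contravariance I would fix $\phi\in\gln$ and $P\in\cP_o^n$. The map $F_u\mapsto\phi F_u$ is a bijection between facets of $P$ and facets of $\phi P$. Four elementary facts drive the calculation. First, using $h(\phi P,x)=h(P,\phi^t x)$, one sees that the outer unit normal of $\phi F_u$ is $\langle\phi^{-t}u\rangle$. Second, the standard Jacobian formula gives
\[V_{n-1}(\phi F_u)=|\det\phi|\,|\phi^{-t}u|\,V_{n-1}(F_u).\]
Third, again from the transformation of $h$, one gets $h(\phi P,\langle\phi^{-t}u\rangle)=h(P,u)/|\phi^{-t}u|$. Fourth, because $\phi$ is invertible with $\phi o=o$, we have $o\in F_u$ iff $o\in\phi F_u$, so the correspondence restricts to a bijection $\cN^*(P)\to\cN^*(\phi P)$. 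Substituting these facts into the defining sums and using $p$-homogeneity of $f$ to absorb the residual factor $|\phi^{-t}u|$ into $\langle\phi^{-t}u\rangle$, one obtains
\[\int_{\sn}f\,d\mu(\phi P,\cdot)=|\det\phi|\int_{\sn}f\circ\phi^{-t}\,d\mu(P,\cdot),\]
with $p=1$ for $S$, $S^*$, $S^o$ and the prescribed $p$ for $S_p$.

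For the valuation property I would start from the classical fact that $S$ is a valuation on $\cP_o^n$ and verify the identity pointwise at each $u\in\sn$. When $h(P,u)\neq h(Q,u)$, only the larger of $P,Q$ contributes to direction $u$, with $P\cup Q$ matching it and $u\notin\cN(P\cap Q)$, so the identity collapses to a tautology. When $h(P,u)=h(Q,u)$, one has $F_u(P\cup Q)=F_u(P)\cup F_u(Q)$ and $F_u(P\cap Q)=F_u(P)\cap F_u(Q)$, and the identity reduces to the valuation property of $V_{n-1}$ on $(n-1)$-dimensional convex sets. The origin-membership of $F_u$ is consistent across the four contributing bodies in each case, which yields the identity for $S^*$ and $S^o$. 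For $S_p$, whenever the identity is nontrivial the common value of $h(\cdot,u)$ factors out as the weight $h(\cdot,u)^{1-p}$, reducing once more to the identity for $V_{n-1}$.

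The main obstacle is the careful case analysis for the valuation property, notably verifying that the "through origin" contributions balance correctly for $S^*$ and $S^o$ and that the support-function weights match between both sides of the identity for $S_p$; this is a standard but fiddly piece of bookkeeping, made slightly more delicate for $S_p$ because the weight $h(P,u)^{1-p}$ is sensitive to any mismatch in heights between the four bodies.
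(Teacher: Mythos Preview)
Your contravariance argument is essentially the paper's: both track the bijection $u\mapsto\langle\phi^{-t}u\rangle$ between facet normals, the Jacobian identity for $V_{n-1}$, and the transformation of $h$, then absorb the stray factor $|\phi^{-t}u|$ via $p$-homogeneity.

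For the valuation property the two approaches diverge. The paper does not argue directly; it quotes a result from \cite{Par10a} that
\[
\oz_f P=\sum_{v\in\cN(P)}V_{n-1}(F(P,v))\,f^*(h(P,v),v)
\]
is a real-valued valuation for suitable $f^*$, and then specializes $f^*$ to recover $S_p$, $S^*$, $S$, and hence $S^o$. Your plan is a direct pointwise verification at each $u$, which is perfectly viable and more self-contained, but your description of the unequal-height case is wrong. It is \emph{not} true that ``only the larger of $P,Q$ contributes'' or that $u\notin\cN(P\cap Q)$: already for $Q\subsetneq P$ with $Q=[0,1]^n$ and $P=[0,2]\times[0,1]^{n-1}$ one has $h(P,e_1)>h(Q,e_1)$ yet $e_1\in\cN(Q)=\cN(P\cap Q)$.

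The correct bookkeeping in the case $h(P,u)>h(Q,u)$ is a pairing, not a vanishing: from the support-function valuation one gets $h(P\cup Q,u)=h(P,u)$ and $h(P\cap Q,u)=h(Q,u)$; the inclusion $F_u(P\cup Q)=F_u(P)$ is immediate; and then the \emph{known} valuation identity for $S$ at $u$ forces $V_{n-1}(F_u(P\cap Q))=V_{n-1}(F_u(Q))$. Since the heights match in pairs, the weights $h(\cdot,u)^{1-p}$ and the condition $h(\cdot,u)>0$ (equivalently $o\notin F_u$) also match in pairs, and the identity for $S_p$, $S^*$, $S^o$ follows. With this fix your route goes through; the paper's citation simply outsources exactly this facet-by-facet analysis.
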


\begin{proof}First, assume that $p\neq 1$. Given a $p$-homogeneous function $f:\R^n\backslash\{o\}\to\R$, define for $v\in\sn$,
\[
f^*(t,v)=\left\{\begin{array}{ll}
0&t\leq 0,\\
t^{1-p}f(v)&t>0.
\end{array}\right.
\]
It was shown in \cite{Par10a} that for such functions $f^*:\R\times\sn\to\R$ the expression
\[\oz_f P = \sum_{v\in\cN(P)}V_{n-1}(F(P,v))f^*(h(P,v),v)\]
is a real-valued valuation. Here, $F(P,v)$ denotes the facet of $P$ with outer unit normal vector $v$.
Let $\omega$ be a Borel set on $\sn$ and take $f(x)=\mathbb{I}_{\omega}(x/|x|)|x|^p$, where $\mathbb I_\omega$ denotes the indicator function of $\omega$.
Then we have $S_p(P,\omega)=\oz_f P$ and we see that $S_p$ is a measure valued valuation. For $\phi\in\gln$ and $v\in\sn$ we clearly have
\[v\in\cN(P)\Longleftrightarrow\langle\phi^{-t}v\rangle\in\cN(\phi P)\] 
as well as
\[V_{n-1}(F(\phi P,\langle \phi^{-t} v\rangle))=\|\phi^{-t} v\|\,|\det\phi|V_{n-1}(F(P,v)).\]
The transformation behavior of the support function with respect to the general linear group and the homogeneity of $f^*$ yield
\begin{eqnarray*}
f^*(h(\phi P, \langle \phi^{-t} v\rangle),\langle \phi^{-t} v\rangle)&=& f^*(h(P,\phi^t \langle \phi^{-t} v\rangle),\langle \phi^{-t} v\rangle)\\
&=&f^*(\|\phi^{-t}v\|^{-1}h(P,v),\|\phi^{-t}v\|^{-1}\phi^{-t}v)\\
&=&\|\phi^{-t}v\|^{-1}f^*(h(P,v),\phi^{-t}v).
\end{eqnarray*}
Therefore we obtain that
\[V_{n-1}(F(\phi P,\langle \phi^{-t} v\rangle))f^*(h(\phi P,\langle \phi^{-t} v\rangle),\langle \phi^{-t} v\rangle)=|\det\phi|V_{n-1}(F(P,v))f^*(h(P,v),\phi^{-t}v).
\]
This immediately implies the $\gln$ contravariance of degree $p$ of $S_p$.
For $p=1$, the same proof yields the desired properties for $S^*$.
By changing the definition of $f^*$ to
	\[
	f^*(t,v)=\left\{\begin{array}{ll}
	0&t < 0,\\
	f(v)&t \geq 0
	\end{array}\right.
	\]
we obtain these properties for the surface area measure $S$.
Therefore we also have them for $S^o=S-S^*$.
\end{proof}
For $p>0$, the $L_p$ cosine transform of a signed finite Borel measure $\mu$ on $S^{n-1}$ is defined by
\[\C_p\mu(u)=\int_{\sn}|u\cdot v|^p\,d\mu(v),\qquad u\in\sn.\]
We need the following injectivity result: For a finite signed Borel measure $\mu$ on $\sn$
\begin{equation}\label{inject}
\C_1\mu=0\Longrightarrow \mu(\omega)=\mu(-\omega) ,
\end{equation}
for each Borel set $\omega$ on $S^{n-1}$. 

In the proof of our classification results for positive $p$ we will make use of known characterizations of function valued valuations. Therefore, we need a translation of $\sln$ contravariance to such valuations. For positive $p$, a function $\oz:\cP_o^n\to C_p^+(\R^n)$ is called $\sln$ contravariant if $\oz(\phi P)(x)=\oz P(\phi^{-1} x)$ for all $P\in\cP_o^n$, each $\phi\in\sln$, and all $x\in\R^n$. The next two results were established in \cite{Hab11} and \cite{Par10a}, respectively.

\begin{theorem}\label{funcchar1}If $\oz:\cP_o^n\to\langle C_1^+(\R^n),+\rangle$ is an even $\sln$ contravariant valuation, then there exist constants $c_1,c_2\in\R$ such that
\[\oz P =\C_1 \left( c_1S(P,\cdot)+c_2S^*(P,\cdot) \right) \]
for every $P\in\cP_o^n$.
\end{theorem}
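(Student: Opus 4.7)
The plan is a Ludwig-style reduction to simplices followed by a functional-equation analysis on a one-parameter family.

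First, I would observe that the proof of Lemma~\ref{ext} uses only inclusion--exclusion on $\cP_o^n$ together with $\sln$ contravariance, with Riesz representation replaced here by pointwise evaluation; therefore $\oz$ is determined by $\oz(\{o\})$ and by $\oz(sT^n)$ for $s>0$. Because $\oz(\{o\})\in C_1^+(\R^n)$ is continuous, positively $1$-homogeneous, and satisfies $\oz(\{o\})(x)=\oz(\{o\})(\phi^{-1}x)$ for every $\phi\in\sln$, and because $\sln$ acts transitively on $\R^n\setminus\{o\}$, this function must vanish identically. Next, the diagonal map $\psi_s=\mathrm{diag}(s^{n-1},s^{-1},\ldots,s^{-1})\in\sln$ sends $sT^n$ to $T_\lambda:=\conv\{o,\lambda e_1,e_2,\ldots,e_n\}$ with $\lambda=s^n$, so contravariance reduces everything to the study of $f_\lambda:=\oz(T_\lambda)$ for $\lambda>0$. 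Further $\sln$ elements that stabilise $T_\lambda$ produce additional symmetries of $f_\lambda$, while the evenness hypothesis supplies the reflection symmetry compatible with the cosine transform.

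For the core step, I would derive a functional equation from a dissection. For $\lambda,\mu>0$, the hyperplane through $o,e_3,\ldots,e_n$ that cuts the edge joining $(\lambda+\mu)e_1$ and $e_2$ appropriately splits $T_{\lambda+\mu}$ into two polytopes in $\cP_o^n$ that are $\sln$-equivalent to $T_\lambda$ and $T_\mu$, respectively. The shared $(n-1)$-dimensional face $F$ is a simplex containing $o$, and its stabiliser in $\sln$ contains shears that fix $F$ pointwise while acting freely on the direction transverse to its affine hull; hence $\oz(F)(x)$ depends only on the transverse coordinate of $x$, and positive $1$-homogeneity together with non-negativity and evenness reduce $\oz(F)(x)$ to $c\,|\ell_{\lambda,\mu}(x)|$ for a single nonnegative constant $c$ and an explicit linear form $\ell_{\lambda,\mu}$. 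Since $\sln$ is transitive on full-dimensional $(n-1)$-simplices through $o$, the constant $c$ is universal. The valuation identity then expresses $f_{\lambda+\mu}$ through $f_\lambda$, $f_\mu$ and a correction linear in $c$.

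Combined with the previous symmetries and the homogeneity/positivity constraints, this Cauchy-type identity pins $f_\lambda$ down to a two-parameter family, which direct computation identifies with $\{\C_1(c_1S(T_\lambda,\cdot)+c_2S^*(T_\lambda,\cdot)):c_1,c_2\in\R\}$. Since $P\mapsto\C_1(c_1S(P,\cdot)+c_2S^*(P,\cdot))$ is itself an even $\sln$ contravariant valuation in $\langle C_1^+(\R^n),+\rangle$, the uniqueness in Lemma~\ref{ext} extends the identity from $\{T_\lambda\}$ to all of $\cP_o^n$. The main obstacle is engineering the dissection so that both full-dimensional pieces fall back into the parametrized family after an $\sln$ change of coordinates and then pinning $\oz(F)$ down to a single scalar; evenness is indispensable to rule out odd $\sln$ contravariant summands that would otherwise survive in the solution space of the Cauchy-type equation.
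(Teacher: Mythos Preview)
The paper does not contain a proof of Theorem~\ref{funcchar1}: it is quoted verbatim from \cite{Hab11} as an input (see the sentence immediately preceding the statement, ``The next two results were established in \cite{Hab11} and \cite{Par10a}, respectively''). So there is no ``paper's own proof'' to compare against; the result is used as a black box in the proof of Theorem~\ref{main1}.

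That said, your outline is precisely the Ludwig-style strategy that \cite{Hab11} employs, and it mirrors what the present paper does for the measure-valued analog (Lemma~\ref{lower1} and the proof of Theorem~\ref{main1}): reduce to multiples of $T^n$ via inclusion--exclusion and $\sln$ contravariance, kill $\oz(\{o\})$ by transitivity, evaluate $\oz$ on an $(n-1)$-dimensional face to a single scalar using shear-invariance plus evenness, and then feed the standard dissection of $T^n$ by the hyperplanes $H_\lambda$ into a Cauchy-type equation. Your reparametrization via $T_\lambda=\conv\{o,\lambda e_1,e_2,\ldots,e_n\}$ is equivalent to the paper's use of $sT^n$ after the diagonal $\sln$ map you wrote down, and your dissection of $T_{\lambda+\mu}$ is the same cut up to that change of coordinates.

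The one place where your sketch is genuinely thin is the sentence ``this Cauchy-type identity pins $f_\lambda$ down to a two-parameter family.'' In both \cite{Hab11} and the present paper's measure-valued arguments, this is where the real work sits: one has to show that the functional equation, together with the symmetries and the nonnegativity of $\oz P$, forces $\oz(T_\lambda)(x)$ to be supported (in the appropriate sense) on the facet normals of $T_\lambda$, and then that the coefficients scale correctly in $\lambda$. Simply invoking ``Cauchy-type'' does not by itself produce two parameters rather than infinitely many; you need the countable/discrete-support style argument (cf.\ Lemma~\ref{keq} here, or the corresponding step in \cite{Hab11}) or an explicit evaluation at well-chosen directions. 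If you fill that in, your proof would be essentially the one in \cite{Hab11}.
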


\begin{theorem}\label{funcchar2}Let $p\in\R_+\backslash\{1\}$. If $\oz:\cP_o^n\to\langle C_p^+(\R^n),+\rangle$ is an even $\sln$ contravariant valuation, then there exists a constant $c\in\R$ such that
\[\oz P =\C_p \left( cS_p(P,\cdot) \right) \]
for every $P\in\cP_o^n$.
\end{theorem}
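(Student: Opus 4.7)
The plan is to follow the standard classification blueprint for $\sln$ contravariant valuations on $\cP_o^n$: reduce the problem to determining $\oz(sT^n)$ for $s>0$ together with $\oz\{o\}$, and then use the symmetries of $T^n$ and the valuation property to pin down those values explicitly.

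First I would invoke the inclusion--exclusion principle referenced before Lemma \ref{ext}: every valuation on $\cP_o^n$ is determined by its values on $\{o\}$ and on $n$-simplices having one vertex at the origin. Since $\sln$ acts transitively on ordered bases of $\R^n$, $\sln$ contravariance reduces the task to finding $\oz\{o\}$ and the one-parameter family $\{\oz(sT^n):s>0\}$. The value $\oz\{o\}$ must vanish: since $\{o\}$ is fixed by every $\phi\in\sln$, contravariance gives $\oz\{o\}(x)=\oz\{o\}(\phi^{-1}x)$ for all $x$, so transitivity of $\sln$ on $\R^n\setminus\{o\}$ forces $\oz\{o\}$ to be constant on $\R^n\setminus\{o\}$, and $p$-homogeneity with $p>0$ then makes that constant zero.

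The main task is identifying $\oz(sT^n)$. The stabilizer of $T^n$ in $\sln$ is generated by the even permutations of $\{e_1,\ldots,e_n\}$, so contravariance makes $\oz(T^n)(x)$ symmetric in $x_1,\ldots,x_n$, and the evenness assumption $\oz(T^n)(-x)=\oz(T^n)(x)$ enlarges this symmetry further. I would then dissect $T^n$ (or a suitable dilate) by a hyperplane through the origin into two $n$-simplices, each identifiable via an $\sln$ map with a positive multiple of $T^n$. The valuation identity, combined with contravariance applied to these pieces, produces a Cauchy-type functional equation involving the scaling map $s\mapsto\oz(sT^n)$. Continuity, $p$-homogeneity, and the symmetries above then force $\oz(T^n)(x)$ to be a constant multiple of $|x\cdot(e_1+\cdots+e_n)|^p$. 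Since $S_p(T^n,\cdot)$ is supported on the single unit normal $\langle e_1+\cdots+e_n\rangle$ to the one facet of $T^n$ that does not contain the origin, this function equals $\C_p(cS_p(T^n,\cdot))(x)$ for an appropriate $c\in\R$.

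The main obstacle is the dissection step: choosing the hyperplane so that both pieces are genuinely $\sln$-images of simplices of the form $sT^n$, and then unpacking the resulting identity into a clean one-variable functional equation for $s\mapsto\oz(sT^n)$. Once that equation is set up, the symmetry and homogeneity constraints leave only a one-parameter family of solutions, and a final $\sln$-scaling argument (transporting $T^n$ to $sT^n$ by a suitable element of $\sln$, which is possible because all simplices in the family $\{sT^n:s>0\}$ are related modulo a central dilation absorbed through the homogeneity) extends the identification to every $sT^n$. Combined with the first reduction, this gives the formula $\oz P=\C_p(cS_p(P,\cdot))$ on all of $\cP_o^n$.
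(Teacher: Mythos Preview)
The paper does not prove Theorem~\ref{funcchar2}; it is quoted from \cite{Par10a} with no argument supplied, so there is no in-paper proof to compare your proposal against.

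As a standalone sketch your outline follows the standard blueprint, but it contains one genuine error. In the last paragraph you claim a ``final $\sln$-scaling argument'' transports $T^n$ to $sT^n$ by an element of $\sln$. This is false: the dilation $x\mapsto sx$ has determinant $s^n$, and no element of $\sln$ sends $T^n$ onto $sT^n$ for $s\neq 1$. The correct mechanism---which the present paper uses repeatedly for its own results---is that the dissection itself generates the $s$-dependence: cutting $sT^n$ by the hyperplane $H_\lambda$ with normal $\lambda e_1-(1-\lambda)e_2$ produces pieces that are $\sln$-images of $s\lambda^{1/n}T^n$ and $s(1-\lambda)^{1/n}T^n$, and the resulting two-variable functional equation (the analog of~(\ref{fundeq})) is what forces homogeneity in $s$ via a Cauchy-type argument as in Lemma~\ref{homlemma}. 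You do allude to a Cauchy-type equation earlier, so this may be what you intended, but the final paragraph misstates the mechanism.

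A second, softer gap: you assert that the functional equation together with symmetry, evenness and $p$-homogeneity force $\oz(T^n)(x)$ to be a multiple of $|x\cdot(e_1+\cdots+e_n)|^p$. That conclusion is correct, but it is precisely the nontrivial content of the result; an even, coordinate-symmetric, $p$-homogeneous nonnegative continuous function need not have this form (take $|x|^p$), so everything rests on extracting the right information from the dissection identity. Your proposal does not indicate how that step goes, and it is where the actual work in \cite{Par10a} lies.
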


As announced in the introduction, we briefly describe where the notion of $G$ contravariance comes from. The basis of the $L_p$ Brunn-Minkowski theory is the following addition for convex bodies. Let $p\geq 1$ and suppose that $P,Q\subset\cP_o^n$ contain the origin in their interiors. For $\varepsilon >0$ there exists a unique convex body $P+_p \varepsilon\cdot Q$ such that
\[h(P+_p \varepsilon\cdot Q,\cdot)^p=h(P,\cdot)^p+\varepsilon h(Q,\cdot)^p.\]
Using this addition, Lutwak \cite{Lut93b} extended the classical case $p=1$ in order to prove that for the volume $V$ and all $p\geq 1$,
\[\lim_{\varepsilon\to0^+}\frac{V(P+_p\varepsilon\cdot Q)-V(P)}{\varepsilon}=\frac{1}{p} \int_{\sn}h(Q,u)^p\,dS_p(P,u).\]
This limit is called the $L_p$ mixed volume of $P$ and $Q$ and is an important notion of the $L_p$ Brunn-Minkowski theory. The $L_p$ mixed volume is denoted by $V_p(P,Q)$. Clearly we have 
\[V_p(\phi P,Q)=|\det \phi|V_p(P,\phi^{-1}Q)\]
for all $\phi\in\gln$. The above integral representation and the fact that $h(\phi^{-1} Q,u)=h(Q,\phi^{-t}u)$ imply that
\[\int_{\sn}h(Q,u)^p\,dS_p(\phi P,u)=|\det \phi|\int_{\sn}h(Q,\phi^{-t}u)^p\,dS_p(P,u).\]
Note that $p$th powers of support functions are homogeneous of degree $p$. Moreover, Kiderlen \cite{Kid11} showed that differences of $p$th powers of support functions are dense in the space of continuous functions on $\sn$. So the last equation immediately implies that $L_p$ surface area measures are $\gln$ contravariant of degree $p$.

\section{Functional equations}

Let $\lambda\in(0,1)$ and $p\in\R$. We define two families of linear maps on $\R^n$ by
\[
\phi_{\lambda}e_1=e_1, \quad \phi_{\lambda} e_2=(1-\lambda)e_1+\lambda e_2,\quad\phi_{\lambda} e_k=e_k \quad\textrm{for}\,\,3\leq k\leq n,
\]
and
\[
\psi_{\lambda} e_1=(1-\lambda)e_1+\lambda e_2,\quad\psi_{\lambda}e_2=e_2,\quad\psi_{\lambda} e_k=e_k \quad\textrm{for}\,\,3\leq k\leq n.
\]
The following functional equation for $f:\R_+\times \R^n\backslash\{o\}\to\R$ will play a key role:
\begin{equation}\label{fundeq}
f(s,x)=\lambda^{\frac pn}f\left(s\lambda^{\frac 1n},\phi_{\lambda}^tx\right)+(1-\lambda)^{\frac pn}f\left(s(1-\lambda)^{\frac 1n},\psi_{\lambda}^tx\right).
\end{equation}
The next result proves that a function which satisfies (\ref{fundeq}) at certain points is homogeneous in its first argument.

\begin{lemma}\label{homlemma}Let $p\in\R$ and suppose that a function $f:\R_+\!\times\R^n\backslash\{o\}\to\R$ satisfies (\ref{fundeq}). If $x\in\R^n\backslash\{o\}$ is a fixed point of $\phi_{\lambda}^t$ and $\psi_{\lambda}^t$ and $f(\cdot,x)$ is bounded from below on some open interval then
\[f(s,x)=s^{n-p}f(1, x)\]
for every $s>0$.
\end{lemma}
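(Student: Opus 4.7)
The plan is to use the fixed point condition to collapse (\ref{fundeq}) to a one-variable equation, exploit two special values of $\lambda$ to produce two incommensurable periods, and finally upgrade the lower-bound hypothesis to enough regularity to force constancy.

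First, since $\phi_\lambda^t x=\psi_\lambda^t x=x$, setting $g(s)=f(s,x)$ reduces (\ref{fundeq}) to
\[g(s)=\lambda^{p/n}g(s\lambda^{1/n})+(1-\lambda)^{p/n}g(s(1-\lambda)^{1/n}).\]
The substitution $g(s)=s^{n-p}F(s)$ absorbs the expected scaling and, after a direct computation in which the factors $\lambda^{p/n}\lambda^{(n-p)/n}$ and $(1-\lambda)^{p/n}(1-\lambda)^{(n-p)/n}$ collapse to $\lambda$ and $1-\lambda$, yields
\[F(s)=\lambda F(s\lambda^{1/n})+(1-\lambda)F(s(1-\lambda)^{1/n}).\]
The goal reduces to showing $F$ is constant. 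Taking $\lambda=1/2$ gives $F(s)=F(s\cdot 2^{-1/n})$. Using this, the choice $\lambda=1/3$ rewrites $F(s\cdot(2/3)^{1/n})=F((s\cdot 3^{-1/n})\cdot 2^{1/n})=F(s\cdot 3^{-1/n})$, so the right hand side collapses and we also get $F(s)=F(s\cdot 3^{-1/n})$. Passing to $\Phi(t):=F(e^t)$, both $T_1=(\log 2)/n$ and $T_2=(\log 3)/n$ are periods of $\Phi$; since $T_1/T_2=\log 2/\log 3$ is irrational, Kronecker's density theorem makes the group of periods of $\Phi$ dense in $\R$.

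Second, I would use the lower-bound hypothesis to pull out enough regularity. The assumed lower bound on $g$ transfers to a lower bound on $F$ on a compact subinterval, hence on $\Phi$ on an open interval; then $T_1$-periodicity gives $\Phi\geq m$ on all of $\R$. Writing the master equation as $\Phi(t)=e^{nu}\Phi(t+u)+e^{nv}\Phi(t+v)$, where $u=(\log\lambda)/n$, $v=(\log(1-\lambda))/n$, $u,v<0$ and $e^{nu}+e^{nv}=1$, the lower bound applied to $\Phi(t+v)$ yields
\[\Phi(t+u)\leq e^{-nu}\Phi(t)-(e^{-nu}-1)m,\]
which for $u\in(-T_1,0)$ gives a uniform upper bound; combined with $T_1$-periodicity, $\Phi$ is globally bounded. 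Now letting $u\to 0^-$ in the same identity (so $v\to-\infty$ and $e^{nv}\to 0$) the bounded factor $\Phi(t+v)$ is annihilated by $e^{nv}$, and the identity forces $\Phi(t+u)\to\Phi(t)$. Hence $\Phi$ is left continuous at every point.

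Third, a left continuous function with a dense additive group of periods is constant: for $s<t$ pick periods $\tau_k\nearrow t-s$ with $\tau_k<t-s$; then $\Phi(s)=\Phi(s+\tau_k)\to\Phi(t)$. Consequently $\Phi$, and therefore $F$, is constant, so $g(s)=s^{n-p}g(1)$ and $f(s,x)=s^{n-p}f(1,x)$, as required. I expect the main obstacle to be the regularity step: because the equation is intrinsically one-sided ($u$ and $v$ are both negative), both the upper bound and the one-sided continuity have to be squeezed out of the constraint $e^{nu}+e^{nv}=1$ as one variable tends to $0^-$ and the other to $-\infty$; the remainder of the argument is essentially algebraic manipulation and Kronecker's theorem.
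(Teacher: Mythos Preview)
Your argument is correct. One small repair: the step ``then $T_1$-periodicity gives $\Phi\geq m$ on all of $\R$'' tacitly assumes the interval on which you have the lower bound has length at least $T_1$, which is not guaranteed by the hypothesis. Use instead the dense group of periods you have already produced; then any nonempty open interval suffices to propagate the lower bound globally. The upper-bound step is compressed but does work, once read as: apply the inequality with $t$ an integer multiple of $T_1$ (so $\Phi(t)=\Phi(0)$) and $u\in(-T_1,0)$, obtaining $\Phi(t+u)\leq e^{-nu}(\Phi(0)-m)+m\leq 2\Phi(0)-m$ since $e^{-nu}<2$; periodicity then makes this global. The left-continuity and final constancy steps are clean.

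That said, the paper's proof is a different and much shorter route. After reaching your one-variable equation for $g$, it substitutes $h(r)=g(r^{1/n})$ to get
\[h(r)=\lambda^{p/n}h(r\lambda)+(1-\lambda)^{p/n}h(r(1-\lambda)),\]
and then sets $r=a+b$, $\lambda=a/(a+b)$ to obtain
\[(a+b)^{p/n}h(a+b)=a^{p/n}h(a)+b^{p/n}h(b)\]
for all $a,b>0$. Thus $t\mapsto t^{p/n}h(t)$ solves Cauchy's functional equation on $\R_+$ and is bounded below on an interval, hence linear; the conclusion follows immediately. This two-line reduction to a classical result replaces your entire periods/boundedness/left-continuity bootstrap. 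Your method is self-contained and has a nice dynamical flavor, but the Cauchy trick is the natural move here: the free parameter $\lambda$ is exactly a disguised additivity, and recognizing that collapses the problem.
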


\begin{proof} From (\ref{fundeq}) we see that
\begin{equation}\label{hom1}f(s^{\frac 1n},x)=\lambda^{\frac pn}f(s^{\frac 1n}\lambda^{\frac 1n},x)+(1-\lambda)^{\frac pn}f(s^{\frac 1n}(1-\lambda)^{\frac 1n},x)\end{equation}
for every $s>0$ and $\lambda\in(0,1)$. Define a function $g:\R_+\!\to\R$ by
\[g(s)=f(s^{\frac 1n},x).\]
Then, for every $s>0$ and $\lambda\in(0,1)$, equation (\ref{hom1}) reads as
\begin{equation}\label{hom2}
g(s)=\lambda^{\frac pn}g(s\lambda)+(1-\lambda)^{\frac pn}g(s(1-\lambda)).
\end{equation}
Let $a$ and $b$ be arbitrary positive real numbers. Set
\[s=a+b\qquad\textnormal{and}\qquad\lambda=a(a+b)^{-1}.\] 
If we insert these particular values of $s$ and $\lambda$ in (\ref{hom2}), then we have for all $a,b>0$,
\[(a+b)^{\frac pn}g(a+b)=a^{\frac pn}g(a)+b^{\frac pn}g(b).\]
Thus the function $t\mapsto t^{p/n}g(t)$ solves Cauchy's functional equation on $\R_+$ and, by assumption, it is bounded from below on some open interval. It is well known (see e.g. \cite[Corollary 9]{AczelDhombres}) that this implies that $t^{\frac pn}g(t)=tg(1)$ and hence
\[g(t)=t^{1-\frac pn}g(1).\]
Finally, the definition of $g$ immediately yields
\[f(s,x)=g(s^{n})=s^{n-p}g(1)=s^{n-p}f(1,x).\]
\end{proof}

Next, we are going to show that special solutions of (\ref{fundeq}) are determined by their values on a small set.

\begin{lemma} \label{keq}Let $p\in\R$ and suppose that $f:\R_+\!\times\R^n\backslash\{o\}\to\R$ has the following properties:
\begin{itemize}
\item[(i)] $f$ satisfies (\ref{fundeq}).
\item[(ii)] $f$ is positively homogeneous of degree $-p$ in the second argument.
\item[(iii)] For every $s\in\R^+$ the function $f(s,\cdot)$ has countable support if restricted to $\sn$.
\item[(iv)] For every $x\in\R^n\backslash\{o\}$ the function $f(\cdot,x)$ is bounded from below on some open interval.
\item[(v)] For each $\pi\in\sln$ which is induced by a permutation matrix and all $(s,x)\in \R_+\!\times\R^n\backslash\{o\}$  
\begin{equation}\label{per3}f(s,\pi x)=f(s,x).\end{equation}
\end{itemize}  
If 
\[f(s,x)=0\qquad for\,\, every\qquad  (s,x)\in\R_+\!\times\{\pm e_1\},\]
then 
\[f(s,x)=0\qquad for\,\, every\qquad  (s,x)\in\R^+\!\times\R^n\backslash\lin\{e_1+\cdots+e_n\}.\]
\end{lemma}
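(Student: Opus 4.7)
The plan is to show $f(s,x)=0$ on increasingly large subsets of $\R^n\setminus\lin\{e_1+\cdots+e_n\}$. The argument alternates between two mechanisms: applying (\ref{fundeq}) with a cleverly chosen $\lambda$ to lower the complexity of $x$, and using Lemma \ref{homlemma} together with an auxiliary-pair version of the functional equation and the countable-support hypothesis (iii) to rule out the ``fixed-point'' vectors that the first mechanism cannot reach.

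First I would upgrade $f(\cdot,\pm e_1)=0$ to $f(\cdot,\pm e_i)=0$ for every $i$ via (v), and then handle all two-coordinate vectors. Evaluating (\ref{fundeq}) at $x=\pm e_1$ annihilates the $\psi$-summand, since $\psi_\lambda^t(\pm e_1)=\pm(1-\lambda)e_1$, and yields $f(s,e_1\pm\mu e_2)=0$ for $\mu\in(0,1)$. Homogeneity (ii) and permutations (v) extend this to all $ae_i+be_j$ with $a,b$ of the same sign and $a\neq b$. For the opposite-sign case, evaluate (\ref{fundeq}) at $x=e_1-\mu e_2$ with $\mu>0$ and the special choice $\lambda=1/(1+\mu)\in(0,1)$: this forces $(1-\lambda)x_1+\lambda x_2=0$, so $\phi_\lambda^t x=e_1$ and $\psi_\lambda^t x=-\mu e_2$ are both multiples of basis vectors, yielding $f(s,e_1-\mu e_2)=0$. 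Altogether $f(s,ae_i+be_j)=0$ for all $i\neq j$ and all $(a,b)\neq 0$ with $a\neq b$.

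The crucial new step is the ``diagonal'' case $x=a(e_1+e_2)$. Here $x$ is a common fixed point of $\phi_\lambda^t$ and $\psi_\lambda^t$, so Lemma \ref{homlemma} (whose boundedness requirement is supplied by (iv)) gives $f(s,x)=s^{n-p}f(1,x)$. I would then transport (\ref{fundeq}) to the pair $(1,3)$ by conjugation with the transposition swapping $e_2$ and $e_3$, which is allowed by (v); the resulting $\psi$-image $(1-\rho)e_1+e_2$ has two distinct nonzero coordinates, so by the previous paragraph that summand vanishes. What remains is, for every $s>0$ and $\rho\in(0,1)$,
\[
f\bigl(s\rho^{1/n},\,a(e_1+e_2)+a(1-\rho)e_3\bigr)=\rho^{-p/n}\,s^{n-p}\,f(1,a(e_1+e_2)).
\]
Fixing $s'>0$ and choosing $s=s'\rho^{-1/n}$ turns the left side into a value of $f(s',\cdot)$ at the direction $\langle a(e_1+e_2)+a(1-\rho)e_3\rangle$, which depends injectively on $\rho\in(0,1)$. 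If $f(1,a(e_1+e_2))$ were nonzero, $f(s',\cdot)|_{\sn}$ would be nonzero on an uncountable set, contradicting (iii). Hence $f(\cdot,a(e_1+e_2))\equiv 0$, and by (ii) and (v) the same holds for every $a(e_i+e_j)$.

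The remaining case $x\notin\lin\{e_1+\cdots+e_n\}$ with $k\geq 3$ nonzero coordinates would be handled by induction on $k$. A permutation places two nonzero entries of $x$ in positions $1,2$. If these have opposite signs, the choice $\lambda=x_1/(x_1-x_2)\in(0,1)$ in (\ref{fundeq}) zeroes out the common new entry $(1-\lambda)x_1+\lambda x_2$, so $\phi_\lambda^t x$ and $\psi_\lambda^t x$ each have at most $k-1$ nonzero coordinates (and remain off the diagonal, since the other of $x_1,x_2$ survives), and the inductive hypothesis closes the argument. When all nonzero entries of $x$ share a sign, direct zeroing is impossible: one either finds a transposition fixing $x$ on a coordinate pair (enabling Lemma \ref{homlemma}) or, by picking $\lambda$ so that the new entry coincides with an existing value, reduces the number of distinct coordinate values; then one reruns the countable-support argument of the diagonal case with a suitable auxiliary pair. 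The main obstacle is precisely this all-same-sign step: one must select the auxiliary pair $(i,k)$ so that exactly one of the two FE-summands is already in the zero set of $f$ by inductive hypothesis, and check that as $\rho$ varies over $(0,1)$ the surviving image traces out an uncountable family of distinct unit directions, so that (iii) delivers the desired contradiction.
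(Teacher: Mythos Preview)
Your overall strategy---induction on the number of nonzero coordinates, case split by sign pattern, and the countable-support argument to kill the diagonal directions---is exactly the route the paper takes. Your handling of the two-coordinate cases, the opposite-sign reduction via $\lambda = x_1/(x_1-x_2)$, and the diagonal case (conjugating to a fresh coordinate pair and invoking (iii)) all match the paper's argument essentially verbatim.

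The one place you leave genuinely incomplete is the same-sign, non-diagonal step for $k\ge 3$ nonzero coordinates. You correctly observe that evaluating (\ref{fundeq}) at $x$ itself cannot zero out a coordinate when all nonzero entries share a sign, and you then sketch a workaround via auxiliary pairs and a secondary induction on the number of distinct coordinate values. That plan is plausible but vague, and it is more work than needed. The paper's device is much simpler: instead of evaluating (\ref{fundeq}) at $x$, evaluate it at $y=\psi_\lambda^{-t}x$ (or $\phi_\lambda^{-t}x$). Concretely, if $0<x_1<x_2$ (or $x_2<x_1<0$), take $\lambda=x_1/x_2$; then $y=\psi_\lambda^{-t}x$ has first coordinate zero, so both $y$ and $\phi_\lambda^t y$ have only $k-1$ nonzero coordinates. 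In the functional equation at $y$, the $\psi$-summand is $f(\cdot,\psi_\lambda^t y)=f(\cdot,x)$, while the left-hand side and the $\phi$-summand vanish by the inductive hypothesis; this forces $f(\cdot,x)=0$ directly. The analogous choice $\lambda=(x_1-x_2)/x_1$ with $y=\phi_\lambda^{-t}x$ handles $0<x_2<x_1$. With this trick the countable-support argument is needed only for the genuinely diagonal vectors $a(e_1+\cdots+e_{j+1})$, exactly as in your $k=2$ treatment.
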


\begin{proof} Note that (\ref{fundeq}) gives for all $(s,x)\in\R_+\times\R^n\backslash\{o\}$ and each $\lambda\in(0,1)$,
\begin{eqnarray}
f(s,\psi_{\lambda}^{-t}x)&=&\lambda^{\frac pn}f(s\lambda^{\frac 1n},\phi_{\lambda}^t\psi_{\lambda}^{-t}x)+(1-\lambda)^{\frac pn}f(s(1-\lambda)^{\frac 1n},x),\label{eq71}\\
f(s,\phi_{\lambda}^{-t}x)&=&\lambda^{\frac pn}f(s\lambda^{\frac 1n},x)+(1-\lambda)^{\frac pn}f(s(1-\lambda)^{\frac 1n},\psi_{\lambda}^t\phi_{\lambda}^{-t}x)\label{eq72}.
\end{eqnarray}
For $1\leq j\leq n-1$ we use induction on the number $j$ of non-vanishing coordinates to prove that $f(s,x)=0$ for all $s>0$ and every $x$ with $j$ non-vanishing coordinates. Let $j=1$. Since we can always find a permutation matrix $\pi\in\sln$ with $e_i=\pi e_1$, we have
\[f(s,\pm e_i)=f(s,\pm\pi e_1)=f(s,\pm e_1)=0.\]
Since $f$ is positively homogeneous in the second argument we infer that $f(s,x)=0$ for every $x$ with one non-vanishing coordinate.
Let $1\leq j<n-1$ and suppose that $f(s,x)=0$ for every $s>0$ and every $x\neq o$ with at most $j$ non-vanishing coordinates. By (\ref{per3}) it is enough to prove $f(s,x)=0$ for
 $x=x_1e_1+\cdots+x_{j+1}e_{j+1}$ with $x_1,\ldots,x_{j+1}\neq 0$. Suppose that $0<x_1<x_2$ or $x_2<x_1<0$ and set $\lambda=x_1/x_2$. Then
\begin{eqnarray*}
\psi_{\lambda}^{-t}x&=&x_2e_2+x_3e_3+\cdots+x_{j+1}e_{j+1},\\
\phi_{\lambda}^t\psi_{\lambda}^{-t}x&=&x_1e_2+x_3e_3+\cdots+x_{j+1}e_{j+1}.
\end{eqnarray*}
Relation (\ref{eq71}) and the induction hypothesis show $f(s(1-\lambda)^{\frac 1n},x)=0$
for every $s\in\R_+$, and hence $f(s,x)=0$ for every $s\in\R_+$.  

If $0<x_2<x_1$ or $x_1<x_2<0$, then set $\lambda=(x_1-x_2)/x_1$. Thus
\begin{eqnarray*}
\phi^{-t}_{\lambda}x&=&x_1e_1+x_3e_3+\cdots+x_{j+1}e_{j+1},\\
\psi_{\lambda}^t\phi_{\lambda}^{-t}x&=&x_2e_1+x_3e_3+\cdots+x_{j+1}e_{j+1}.
\end{eqnarray*}
Relation (\ref{eq72}) and the induction hypothesis show $f(s\lambda^{\frac 1n},x)=0$
for every $s\in\R_+$, and hence $f(s,x)=0$ for every $s\in\R_+$.  

If $\textrm{sgn}(x_1)\neq\textrm{sgn}(x_2)$, set $\lambda=x_1/(x_1-x_2)$. Then 
\begin{eqnarray*}
\phi^{t}_{\lambda}x&=&x_1e_1+x_3e_3+\cdots+x_{j+1}e_{j+1},\\
\psi_{\lambda}^tx&=&x_2e_2+x_3e_3+\cdots+x_{j+1}e_{j+1}.
\end{eqnarray*}
It follows directly from (\ref{fundeq}) that $f(s,x)=0$ for every $s\in\R_+$. In conclusion, we proved that for $x=x_1e_1+\cdots+x_{j+1}e_{j+1}$ with $x_1,\ldots,x_{j+1}\neq 0$ and $x_1\neq x_2$ we have $f(s,x)=0$ for every $s\in\R_+$. This and (\ref{per3}) actually show that $f(s,x)=0$ for every $s\in\R_+$ and each $x=x_1e_1+\cdots+x_{j+1}e_{j+1}$ where at least two coordinates are different. It remains to prove that $f(s,x)=0$ for every $s\in\R_+$ and $x=x_1e_1+\cdots+x_1e_{j+1}$ with $x_1\neq0$. By the homogeneity of $f$ in its second argument it suffices to prove that $f(s,x)=0$ for every $s\in\R_+$ and $x=e_1+\cdots+e_{j+1}$ or $x=-e_1-\cdots-e_{j+1}$. We consider only the case $x=e_1+\cdots+e_{j+1}$; the other one is treated similarly. Let $0<\lambda<1$ and set $y=\lambda e_1+e_2+\cdots+e_{j+2}$. Note that
\begin{eqnarray*}
\psi_{\lambda}^{-t}y&=&e_2+e_3+\cdots+e_{j+2},\\
\phi_{\lambda}^t\psi_{\lambda}^{-t}y&=&\lambda e_2+e_3+\cdots+e_{j+2}.
\end{eqnarray*}
By what we have already shown, relation (\ref{per3}), and (\ref{eq71}) we arrive at 
\[f(s,\psi_{\lambda}^{-t}y)=(1-\lambda)^{\frac pn}f(s(1-\lambda)^{\frac 1n}, y).\]
From (\ref{per3}) and Lemma \ref{homlemma} we infer that $f(\cdot,\psi_{\lambda}^{-t}y)$ is positively homogeneous of degree $n-p$, and hence
\[(1-\lambda)^{-1}f(1,\psi_{\lambda}^{-t}y)=f(1, y)\]
for all $0<\lambda<1$. If $f(1,\psi_{\lambda}^{-t}y)=f(1,e_2+e_3+\cdots+e_{j+2})$ were nonzero, then $f(1,y)=f(1,\lambda e_1+e_2+\cdots+e_{j+2})$ would therefore be nonzero for all $0<\lambda<1$. But since $f$ is positively homogeneous in the second argument, this would contradict the assumption that $f(1,\cdot)$ has countable support on $\sn$. By homogeneity, for each $s\in\R^+$, we have $f(s,\psi_{\lambda}^{-t}y)=0$ and hence (\ref{per3}) gives $f(s,e_1+\cdots+e_{j+1})=0$. This concludes the induction. 

We showed that $f(s,x)=0$ for $s\in\R_+$ and points $x$ with at most $n-1$ non-vanishing coordinates. As in the first part of the induction we see that also $f(s,x)=0$ for $s\in\R_+$ and points $x$ with $n$ non-vanishing coordinates provided that at least two of them are different.
\end{proof}

\section{The case $p=1$}

\begin{lemma}\label{lower1} Suppose that $\mu:\cP_o^n\to\cM^d(\sn)$ is $\sln$ contravariant of degree $1$. Then there exists a constant $a\in\R_+\!$ such that
\[\mu(sT',\cdot)=as^{n-1}(\delta_{e_1}+\delta_{-e_1})\]
for every $s\in\R_+$ and $\mu(\{o\},\cdot)=0$, where $T'$ denotes the $(n-1)$-dimensional standard simplex in $e_1^\bot$.
\end{lemma}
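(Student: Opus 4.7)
The plan is to exploit two structural features of $sT'$: its $\sln$-stabilizer is rich (it contains every shear fixing the hyperplane $e_1^\bot$, together with certain sign-changing permutations of the coordinate axes), and there is a diagonal $\sln$-map linking $T'$ to $sT'$. Because the values of $\mu$ lie in $\cM^d(\sn)$, it suffices to argue pointwise via (\ref{eqivmeas}); I will combine this with the contravariance identity (\ref{eqn1}) throughout.

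I would first dispose of $\mu(\{o\},\cdot)$. Every $\phi\in\sln$ fixes $\{o\}$, while $\sln$ acts transitively on $\R^n\setminus\{o\}$; so given any $x,y\in\sn$, choose $\phi\in\sln$ with $\phi^tx=y$ and read (\ref{eqn1}) at $P=\{o\}$ to obtain $\mu(\{o\},x)=\mu(\{o\},y)$. A constant discrete positive finite measure on $\sn$ must vanish.

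Next I would show that $\mu(sT',\cdot)$ is supported on $\{\pm e_1\}$. For any $y\in e_1^\bot\setminus\{o\}$ and $t\in\R$, the shear $\phi_t:=I+tye_1^T$ belongs to $\sln$, acts as the identity on $e_1^\bot$, and therefore fixes $sT'$; a short computation gives $\phi_t^tx=x+t(y\cdot x)e_1$. Given $x=\alpha e_1+y_0\in\sn$ with $y_0\in e_1^\bot\setminus\{o\}$, taking $y=y_0$ and varying $t$ transports $\langle x\rangle$ along the one-parameter family of pairwise distinct unit vectors $\langle(\alpha+t|y_0|^2)e_1+y_0\rangle$, each of which would carry positive mass if $\mu(sT',x)>0$. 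Discreteness of $\mu(sT',\cdot)$ thus forces $\mu(sT',x)=0$ for every $x\in\sn\setminus\{\pm e_1\}$. For the symmetry between the two surviving directions I would use the $\sln$-map (available because $n\geq 3$) defined by $\phi e_1=-e_1$, $\phi e_2=e_3$, $\phi e_3=e_2$, and $\phi e_j=e_j$ for $j\geq 4$: it has determinant $(-1)(-1)=1$, preserves $sT'$, and satisfies $\phi^te_1=-e_1$, so (\ref{eqn1}) yields $\mu(sT',e_1)=\mu(sT',-e_1)$. The homogeneity finally falls out of the diagonal map $\psi_s:=\mathrm{diag}(s^{1-n},s,\ldots,s)\in\sln$, which sends $T'$ to $sT'$: applying (\ref{eqn1}) with $P=T'$, $\phi=\psi_s$, $x=e_1$ and noting $\psi_s^te_1=s^{1-n}e_1$ gives $\mu(sT',e_1)=s^{n-1}\mu(T',e_1)$. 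Setting $a:=\mu(T',e_1)\geq 0$ completes the proof.

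The obstacle is conceptual rather than computational: the whole argument hinges on locating three specific families in $\sln$ — the shears fixing $e_1^\bot$, the sign-changing permutation (which requires $n\geq 3$ in order to live in $\sln$), and the counter-scaling $\psi_s$ that absorbs the dilation on $e_1^\bot$ into a determinant-one map. Once these symmetries are in hand, every step is a one-line application of (\ref{eqn1}) combined with the discreteness of $\mu$.
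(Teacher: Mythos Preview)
Your proof is correct and follows essentially the same approach as the paper's: the same shears fixing $e_1^\bot$ to force the support onto $\{\pm e_1\}$ via discreteness, the same sign-changing permutation $\phi e_1=-e_1,\ \phi e_2=e_3,\ \phi e_3=e_2$ for the symmetry $\mu(sT',e_1)=\mu(sT',-e_1)$, and the same counter-scaling $\mathrm{diag}(s^{1-n},s,\ldots,s)$ for the homogeneity. The only cosmetic difference is your treatment of $\mu(\{o\},\cdot)$ via transitivity of $\sln$ on $\R^n\setminus\{o\}$, whereas the paper observes that $\{o\}$ lies in both $e_1^\bot$ and $e_2^\bot$ and so its measure must be supported simultaneously at $\{\pm e_1\}$ and $\{\pm e_2\}$.
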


\begin{proof} First, we show that $\mu(P,\cdot)$ is supported at $\pm e_k$ provided that $P\subset e_k^{\bot}$. So let $P\subset e_k^{\bot}$ and set $f(P,x)=\mu(P,\langle x\rangle)|x|^{-1}$ for $x\in\R^n\backslash\{o\}$. Suppose that $x\in\R^n$ is given with $x_j\neq 0$ for some $j\neq k$. For $t\in\R$ define $\phi\in\sln$ by
\[\phi e_k=e_k+te_j,\quad\phi e_i=e_i,\quad i\in\{1,\ldots,n\}\backslash\{k\}.\]
Since $\phi P=P$ we get from (\ref{eqn1}) that
\[f(P,x)=f(\phi P,x)=f(P,\phi^t x)=f(P,(x_1,\ldots, x_{k-1},x_k+tx_j,x_{k+1},\ldots,x_n)).
\]
If $f(P,x)\neq 0$, then 
\[f(P,(x_1,\ldots, x_{k-1},x_k+tx_j,x_{k+1},\ldots,x_n))\neq 0\] for all $t\in\R$.
Since these correspond to uncountably many points on $\sn$ and $f(P,\cdot)$ restricted to $\sn$ has countable support, $f(P,x)$ has to be zero. Consequently, $f(P,\cdot)$ is supported only at $\pm e_k$.

 In particular, the measure $\mu(\{o\},\cdot)$ has to be supported at $\pm e_1$ as well as $\pm e_2$. Therefore, it has to be zero. It remains to prove the formula for $\mu(sT',\cdot)$. We already know that $f(sT',\cdot)$ is supported only at $\pm e_1$. Thus
\[f(sT',x)=a_1(s)\delta_{e_1}(\langle x\rangle)|x|^{-1}+a_2(s)\delta_{-e_1}(\langle x\rangle)|x|^{-1}.\]
Define $\psi\in\sln$ by
\[\psi e_1=-e_1,\quad \psi e_2=e_3, \quad \psi e_3=e_2,\quad \psi e_i=e_i,\quad 4\leq i\leq n.\]
Then by (\ref{eqn1}) and the relation $sT'=\psi(sT')$ we have $f(sT',e_1)= f(sT',-e_1)$ and therefore $a_1(s)=a_2(s)$. Finally, define $\tau\in\sln$ by
\[\tau e_1=s^{1-n}e_1,\quad \tau e_i=se_i,\quad 2\leq i\leq n.\]
Then $sT'=\tau T'$ and relation (\ref{eqn1}) again show that
\[f(sT',e_1)=s^{n-1}f(T',e_1),\]
which proves $a_1(s)=s^{n-1}a(1)$. Now set $a=a(1)$. \end{proof}

Now, we are in a position to prove Theorem \ref{main1}. For the reader's convenience we will repeat its statement.
\begin{theorem} A map $\mu:\cP_o^n\to\cM(\sn)$ is an $\sln$ contravariant valuation of degree 1 if and only if there exist constants $c_1,c_2,c_3,c_4\in\R$ with  $c_1,c_2\geq0$ and $c_1+c_3\geq0$, $c_2+c_4\geq 0$ such that
\begin{equation}\label{eq111}\mu(P,\cdot)=c_1S(P,\cdot)+c_2S(-P,\cdot)+c_3S^*(P,\cdot)+c_4S^*(-P,\cdot)\end{equation}
for every $P\in\cP_o^n$.
\end{theorem}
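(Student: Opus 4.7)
The sufficiency direction is routine: by the preceding lemma each of $S, S^*, S(-\cdot), S^*(-\cdot)$ is an $\sln$-contravariant valuation of degree $1$, and the inequalities $c_1, c_2 \geq 0$ and $c_1 + c_3, c_2 + c_4 \geq 0$ are precisely what is needed for the combination to be a positive measure at every facet normal of $\pm P$ (distinguishing facets through $o$ from those avoiding $o$). My plan for the necessity direction is to eliminate the continuous part of $\mu$ using the $L_1$ cosine transform and Theorem \ref{funcchar1}, then classify the discrete part via the functional equation machinery.

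First, decompose $\mu = \mu^c + \mu^d$; both summands remain $\sln$-contravariant valuations of degree $1$. Set $\oz P := \C_1\mu^c(P,\cdot)$. Since $|u\cdot v|$ is continuous, $1$-homogeneous, and even in $u$, the map $\oz$ is an even $C_1^+(\R^n)$-valued $\sln$-contravariant valuation, and Theorem \ref{funcchar1} produces constants $\alpha, \beta$ with $\oz P = \C_1(\alpha S(P,\cdot) + \beta S^*(P,\cdot))$. The injectivity (\ref{inject}) applied to $\mu^c(P,\cdot) - \alpha S(P,\cdot) - \beta S^*(P,\cdot)$, together with the fact that the antipodal reflection of $S(P,\cdot)$ equals $S(-P,\cdot)$, will yield
\[\mu^c(P,\omega) + \mu^c(P,-\omega) = \alpha(S(P,\omega) + S(-P,\omega)) + \beta(S^*(P,\omega) + S^*(-P,\omega)).\]
The right-hand side is discrete (surface area measures of polytopes are sums of point masses) while the left-hand side is continuous, so both must vanish; positivity of $\mu^c$ then forces $\mu^c = 0$.

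Now $\mu = \mu^d$. Lemma \ref{lower1} (via its permutation argument) gives $\mu(\{o\},\cdot) = 0$. Evaluating $\mu(T^n, \cdot)$ at the four orbit classes of facet normals of $\pm T^n$---$\{-e_i\}, \{e_i\}, \{n^{-1/2}\mathbf{1}\}, \{-n^{-1/2}\mathbf{1}\}$ with $\mathbf{1} = e_1 + \cdots + e_n$---and matching with the corresponding values of $S(\pm T^n, \cdot), S^*(\pm T^n, \cdot)$ uniquely determines $c_1, c_2, c_3, c_4$, and the non-negativity of $\mu$ at these four points gives exactly the stated inequalities. Set $\tilde\mu := \mu - (c_1 S + c_2 S(-\cdot) + c_3 S^* + c_4 S^*(-\cdot))$ and $f(s,x) := \tilde\mu(sT^n, \langle x\rangle)|x|^{-1}$; by Lemma \ref{ext} it suffices to prove $f \equiv 0$. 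I would then verify the hypotheses of Lemma \ref{keq}: $(-1)$-homogeneity in $x$ from degree-$1$ contravariance via (\ref{eqn1}); countable support on $\sn$ from discreteness; permutation invariance (\ref{per3}) from $\sln$-symmetry of $T^n$; and lower-boundedness in $s$ from $\mu^d \geq 0$ together with the polynomial growth of the subtracted surface area measures. The equation (\ref{fundeq}) itself is extracted from the dissection $sT^n = \phi_\lambda(sT^n) \cup \psi_\lambda(sT^n)$: rescaling $\phi_\lambda, \psi_\lambda$ by $\lambda^{-1/n}, (1-\lambda)^{-1/n}$ puts them in $\sln$, (\ref{eqn1}) applies, and the correction term on the intersection simplex $I_\lambda$ is, by Lemma \ref{lower1} applied after an $\sln$-map sending $I_\lambda$ to a scaled copy of $T'$, supported only on $\pm$ its normal direction, so (\ref{fundeq}) holds at every $x$ off that line. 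By construction $f(1, \pm e_1) = 0$, and (\ref{fundeq}) together with Lemma \ref{homlemma} propagates this to $f(s, \pm e_1) = 0$ for every $s > 0$. Lemma \ref{keq} then yields $f(s, x) = 0$ for $x \notin \lin\{\mathbf{1}\}$, while the diagonal direction $\pm\mathbf{1}$ (a joint fixed point of $\phi_\lambda^t$ and $\psi_\lambda^t$) is handled by Lemma \ref{homlemma} using $f(1, \pm\mathbf{1}) = 0$, which in turn follows from the choice of $c_1 + c_3$ and $c_2 + c_4$.

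The main obstacle will be the extraction of (\ref{fundeq}) for $f$ from the dissection. One must argue precisely, using Lemma \ref{lower1} transported by an $\sln$-equivalence, that the correction term on the $(n-1)$-dimensional intersection simplex through the origin contributes only on its unique normal direction. Establishing $f(s, \pm e_1) = 0$ uniformly in $s$ is the second subtle point, since no $\sln$-map takes $T^n$ to $sT^n$ when $s \neq 1$; the uniformity must therefore be derived from the functional equation itself together with Lemma \ref{homlemma}.
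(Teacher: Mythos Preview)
Your elimination of the continuous part is correct and in fact cleaner than the paper's route: applying Theorem~\ref{funcchar1} directly to $\C_1\mu^c$ and then reading off from the injectivity~(\ref{inject}) that the symmetrized measure $\mu^c(P,\omega)+\mu^c(P,-\omega)$ coincides with a purely atomic measure, hence vanishes, is a genuine shortcut. The paper instead first classifies $\mu^d$ completely, uses that to isolate $\C_1\mu^c$, and only then evaluates $\C_1\mu^c(sT^n,\cdot)(e_i-e_j)$ to force the support of $\mu^c(sT^n,\cdot)$ into two antipodal points. Your argument avoids this detour.

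There is, however, a real gap in the discrete part. You subtract the full combination $c_1S+c_2S(-\cdot)+c_3S^*+c_4S^*(-\cdot)$ with the four constants fixed by the values $\mu(T^n,\pm e_1)$ and $\mu(T^n,\pm\langle\mathbf 1\rangle)$. This does \emph{not} force $\tilde\mu(sT',\cdot)=0$: since $S^*(sT',\cdot)=0$, vanishing on $sT'$ would require $(c_1+c_2)/(n-1)!=\mu(T',e_1)$, i.e.\ $\mu(T',e_1)=\mu(T^n,e_1)+\mu(T^n,-e_1)$, which is precisely part of what must be proved and is not available a priori. Consequently your $f$ satisfies (\ref{fundeq}) only for $x$ off the normal line $\ell_\lambda=\lin\{\lambda e_1-(1-\lambda)e_2\}$, and this is not enough for Lemma~\ref{keq}: in the step treating $x=x_1e_1+x_2e_2$ with $\operatorname{sgn}(x_1)\neq\operatorname{sgn}(x_2)$, the proof sets $\lambda=x_1/(x_1-x_2)$ and invokes (\ref{fundeq}) exactly at $x\in\ell_\lambda$. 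The paper closes this gap by a two-stage subtraction: first subtract $d_1S^o+d_2S^o(-\cdot)$ with $d_1,d_2$ chosen from Lemma~\ref{lower1} so that the resulting $g$ vanishes identically on every $sT'$; then (\ref{fundeq}) holds for \emph{all} $x$, Lemma~\ref{keq} applies, and $g(sT^n,\cdot)$ is supported at $\pm\langle\mathbf 1\rangle$; only afterwards are the two remaining constants fixed via $S^*$ and $S^*(-\cdot)$. Reordering your subtraction in this way repairs the argument.
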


\begin{proof} Set $f(P,x)=\mu^d(P,\langle x\rangle)|x|^{-1}$, $x\in\R^n\backslash\{o\}$, and define constants
\[d_1=(n-1)!(f(T',e_1)-f(T^n,e_1)),\qquad d_2=(n-1)!f(T^n,e_1).\]
By Lemma \ref{lower1} we have
$$ f(sT',x) = f(T',e_1)s^{n-1} \left( \delta_{e_1}(\langle x\rangle) + \delta_{-e_1}(\langle x\rangle) \right) |x|^{-1} .$$
Hence, the function
\[g(P,x)=f(P,x)-d_1S^o(P,x)-d_2S^o(-P,x)\]
is a valuation which vanishes on $sT'$. For $\lambda\in (0,1)$ let $H_{\lambda}$ be the hyperplane through $o$ with normal vector $\lambda e_1-(1-\lambda)e_2$. Note that
\[(sT^n)\,\cap\, H_{\lambda}^+=s\phi_{\lambda} T^n,\quad (sT^n)\,\cap\, H_{\lambda}^-=s\psi_{\lambda} T^n\quad\textnormal{and}\quad (sT^n)\,\cap\, H_{\lambda}=s\phi_{\lambda} T'.\]
So the valuation property of $g$ implies that for all $(s,x)\in\R_+\times\R^n\backslash\{o\}$,
\[
g(sT^n,x) + g(s\phi_{\lambda} T', x)= g(s\phi_{\lambda}T^n, x  )+ g(s \psi_{\lambda}T^n,x).
\]
By (\ref{eqn1}) we have $g(s\phi_{\lambda} T',x)=\lambda^{1/n}g(s\lambda^{1/n}T',\phi_{\lambda}^tx)$, $g(s\phi_{\lambda} T^n,x)=\lambda^{1/n}g(s\lambda^{1/n}T^n,\phi_{\lambda}^tx)$, and $g(s\psi_{\lambda} T^n,x)=(1-\lambda)^{1/n}g(s(1-\lambda)^{1/n}T^n,\psi_{\lambda}^tx)$. This and the fact that $g$ vanishes on multiples of $T'$ proves
\begin{equation}\label{556}
g(sT^n,x)= \lambda^{\frac1n} g(s\lambda^{\frac1n}T^n,\phi_{\lambda}^t x  )+ (1-\lambda)^{\frac1n}g(s(1-\lambda)^{\frac1n} T^n,\psi_{\lambda}^tx).
\end{equation}
Consequently, the map $(s,x)\mapsto g(sT^n,x)$ satisfies (\ref{fundeq}). By Lemma \ref{homlemma} we know that $g(sT^n,e_3)=s^{n-1}g(T^n,e_3)$ for all positive $s$. But $g(sT^n,e_1)=g(sT^n,e_3)$ and $g(T^n,e_1)=0$ by definition. Thus $g(sT^n,e_1)=0$ for all $s\in\R_+$.

For $x=x_1e_1+x_2e_2$ with $x_1>0$ and $x_2<0$ set $\lambda=\frac{x_1}{x_1-x_2}$. Evaluating (\ref{556}) for this $x$ and $\lambda$, Lemma \ref{homlemma} together with the homogeneity of $g$ of degree $-1$ in the second argument, the equality $g(sT^n,e_1)=0$ and the definition of $\lambda$ show that
\begin{eqnarray*}
g(sT^n,x)&=&\lambda^{\frac 1n}g(s\lambda^{\frac 1n}T^n,x_1e_1)+(1-\lambda)^{\frac 1n}g(s(1-\lambda)^{\frac 1n}T^n,x_2e_2)\\
        &=&\frac{\lambda}{x_1}g(sT^n,e_1)+\frac{1-\lambda}{-x_2}g(sT^n,-e_2)\\
        &=&\frac{1}{x_1-x_2}g(sT^n,-e_2).
\end{eqnarray*}
Since $g(sT^n,-e_2)=g(sT^n,-e_1)$ and $g(sT^n,\cdot)$ has at most countable support if restricted to $\sn$, we obtain from the last lines that also $g(sT^n,-e_1)=0$. From Lemma \ref{keq} we further deduce that $g(sT^n,\cdot)$ is supported only at $\pm\langle e_1+\cdots+e_n\rangle$. Define constants
\[d_3=\frac{g(T^n,\langle e_1+\cdots+e_n\rangle)}{S^*(T^n,\langle e_1+\cdots+e_n\rangle)},\qquad d_4 =\frac{g(T^n,-\langle e_1+\cdots+e_n\rangle)}{S^*(-T^n,-\langle e_1+\cdots+e_n\rangle)}.\]
Thus, by Lemma \ref{homlemma}, we have $g(sT^n,x)= d_3S^*(sT^n,x)+d_4S^*(-sT^n,x)$
and consequently
\[
\mu^d(sT^n,x)=d_1S(sT^n,x)+d_2S(-sT^n,x)+(d_3-d_1)S^*(sT^n,x)+(d_4-d_2)S^*(-sT^n,x).
\]
By Lemma \ref{ext} this proves (\ref{eq111}) for the discrete part $\mu^d$. 

Note that $P\mapsto\C_1\mu(P,\cdot)$ is a function from $\cP_o^n$ to $C_1^+(\mathbb{R}^n)$ which is an even $\sln$ contravariant valuation. The linearity of the cosine transform gives $\C_1\mu(P,\cdot)=\C_1\mu^c(P,\cdot)+\C_1\mu^d(P,\cdot)$. From Theorem \ref{funcchar1} we know that
\[\C_1\mu(P,\cdot)=\C_1 \left( d_5S(P,\cdot)+d_6S^*(P,\cdot) \right) .\]
By the discrete case we just established we know that
\[\C_1\mu^d(P,\cdot)=\C_1 \left( d_7S(P,\cdot)+d_8S^*(P,\cdot) \right) ,\]
and hence also
\[\C_1\mu^c(P,\cdot)=\C_1 \left( d_9S(P,\cdot)+d_{10}S^*(P,\cdot) \right) .\]
In particular, we have $\C_1\mu^c(sT',\cdot)=d_9\C_1S(sT',\cdot)$. By the injectivity property (\ref{inject}) we know that $\mu^c(sT',e_1)+\mu^c(sT',-e_1)=2s^{n-1}d_9/(n-1)!$. But since $\mu^c(sT',\cdot)$ is continuous we have $d_9=0$. Thus
\[\C_1\mu^c(sT^n,\cdot)(e_i-e_j)=d_{10}\C_1S^*(sT^n,\cdot)(e_i-e_j)=0\] 
for $1\leq i\neq j\leq n$. So $\mu^c(sT^n,\cdot)$ has to be concentrated on each hyperplane $\{x_i=x_j\}$, and consequently it is concentrated at the two points $\pm\langle e_1+\cdots+e_n\rangle$. The continuity of $\mu^c(sT^n,\cdot)$ therefore implies $\mu^c(sT^n,\cdot)=0$. So by the discrete case we have
\[\mu(sT^n,\cdot)=\mu^d(sT^n,\cdot)=c_1S(sT^n,\cdot)+c_2S(-sT^n,\cdot)+c_3S^*(sT^n,\cdot)+c_4S^*(-sT^n,\cdot).\]
for some constants $c_1,\ldots,c_4\in\R$. 

Next, we want to prove that $\mu(\{o\},\cdot)=0$. For $s>0$ define $\phi\in\sln$ by
\[\phi e_1 =s^{-1} e_1,\quad \phi e_2 =s e_2,\quad \phi e_k= e_k,\quad 3\leq k\leq n.\]
Since $\phi\{o\}=\{o\}$ the $\sln$ contravariance of $\mu$ implies
\begin{eqnarray*}
\mu(\{o\},\sn)&=&\int_{\sn}|x|\,d\mu(\phi\{o\},x)\\
&=&\int_{\sn}(s^2x_1^2+s^{-2}x_2^2+x_3^2+\cdots+x_n^2)^{\frac12}\,d\mu(\{o\},x).
\end{eqnarray*}
Take the limit $s\to\infty$ in the above equation. Then Fatou's lemma implies 
\[\mu(\{o\},\sn)\geq \infty\cdot\mu(\{o\},\{x_1\neq 0\}).\]
Since $\mu(\{o\},\cdot)$ is finite, $\mu(\{o\},\cdot)$ is supported at $\{x_1=0\}$. Similarly, one shows that $\mu(\{o\},\cdot)$ is supported at $\{x_j=0\}$ for $j=2,\ldots,n$. This immediately implies that $\mu(\{o\},\cdot)=0$.

Lemma \ref{ext} and the fact that $S(P,\cdot)=S^o(P,\cdot)+S^*(P,\cdot)$ therefore prove 
\begin{eqnarray*}
\mu(P,\cdot)&=&c_1S(P,\cdot)+c_2S(-P,\cdot)+c_3S^*(P,\cdot)+c_4S^*(-P,\cdot)\\
&=&c_1S^o(P,\cdot)+c_2S^o(-P,\cdot)+(c_1+c_3)S^*(P,\cdot)+(c_2+c_4)S^*(-P,\cdot)
\end{eqnarray*}
for every $P\in\cP_o^n$. In particular
\[
0\leq\mu(T^n, e_1)=c_2/(n-1)!\qquad\textnormal{and}\qquad 0\leq\mu(T^n,-e_1)=c_1/(n-1)!,
\]
as well as
\begin{eqnarray*}
0&\leq&\mu(T^n,\langle e_1+\cdots+e_n\rangle)=\sqrt{n}(c_1+c_3)/(n-1)!,\\
0&\leq&\mu(T^n,-\langle e_1+\cdots+e_n\rangle)=\sqrt{n}(c_2+c_4)/(n-1)!,
\end{eqnarray*}
which proves the asserted relations for the constants.
\end{proof}

\section{The case $p\neq 1$}

We begin by studying $\sln$ contravariant valuations on lower dimensional polytopes. 

\begin{lemma} \label{lemma1}Suppose that $p\neq 1$ and $\mu:\cP_o^n\to\cM(\sn)$ is an $\sln$ contravariant valuation of degree $p$ which vanishes on polytopes of dimension less than $n-1$. Let $s\in\R_+$. If $\mu(sT',\cdot)$ is supported only at the two points $\pm e_1$, then $\mu(sT',\cdot)=0$.
\end{lemma}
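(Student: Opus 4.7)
The strategy is to dissect $sT'$ inside the hyperplane $e_1^\bot$ into two $(n-1)$-dimensional simplices whose common intersection has dimension $n-2$, and hence carries no $\mu$-mass by hypothesis. Evaluating the resulting valuation identity at $\{e_1\}$ via the pointwise $\sln$ contravariance (\ref{eqn1}) will produce a scalar equation forcing $\mu(sT',\{e_1\})=0$ when $p\neq 1$; the same argument at $\{-e_1\}$, combined with the support hypothesis, then gives $\mu(sT',\cdot)=0$. This is the one-dimension-lower analogue of the dissection of $sT^n$ used in the proof of Theorem~\ref{main1}.

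Concretely, for $\lambda\in(0,1)$ I would define $\bar\phi_\lambda,\bar\psi_\lambda\in\sln$ by
\[
\bar\phi_\lambda e_1=\lambda^{-1}e_1,\,\,\bar\phi_\lambda e_2=e_2,\,\,\bar\phi_\lambda e_3=(1-\lambda)e_2+\lambda e_3,\,\,\bar\phi_\lambda e_k=e_k \fuer k\geq 4,
\]
with $\bar\psi_\lambda$ analogous but with the roles of $e_2,e_3$ interchanged and $\bar\psi_\lambda e_1=(1-\lambda)^{-1}e_1$; both are unimodular by direct computation. Cutting $sT'$ with the hyperplane $H'=\{\lambda x_2=(1-\lambda)x_3\}$ through the origin yields $sT'=\bar\phi_\lambda(sT')\cup\bar\psi_\lambda(sT')$, where both pieces are $(n-1)$-dimensional simplices lying in $e_1^\bot$ with a vertex at the origin, and where the intersection $\bar\phi_\lambda(sT')\cap\bar\psi_\lambda(sT')=\conv\{o,\,s((1-\lambda)e_2+\lambda e_3),\,se_4,\ldots,se_n\}$ has dimension $n-2$. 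The vanishing hypothesis and the valuation property then collapse to
\[
\mu(sT',\cdot)=\mu(\bar\phi_\lambda(sT'),\cdot)+\mu(\bar\psi_\lambda(sT'),\cdot).
\]

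A short block-matrix computation gives $\bar\phi_\lambda^t e_1=\lambda^{-1}e_1$ and $\bar\psi_\lambda^t e_1=(1-\lambda)^{-1}e_1$, so evaluating the displayed identity at $\{e_1\}$ and invoking (\ref{eqn1}) gives $\mu(\bar\phi_\lambda(sT'),\{e_1\})=\lambda^p\mu(sT',\{e_1\})$ and $\mu(\bar\psi_\lambda(sT'),\{e_1\})=(1-\lambda)^p\mu(sT',\{e_1\})$, from which we obtain
\[
\mu(sT',\{e_1\})\bigl(1-\lambda^p-(1-\lambda)^p\bigr)=0.
\]
Since $p\neq 1$, at $\lambda=1/2$ the factor $1-2^{1-p}$ is non-zero, forcing $\mu(sT',\{e_1\})=0$. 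The identical computation with $-e_1$ in place of $e_1$ gives $\mu(sT',\{-e_1\})=0$, and the support hypothesis finally yields $\mu(sT',\cdot)=0$. The main (mild) obstacle is organizational: the scalings $\lambda^{-1}$ and $(1-\lambda)^{-1}$ on the $e_1$-axis are forced by unimodularity, and they are precisely what produces the factors $\lambda^p$ and $(1-\lambda)^p$, whose sum equals $1$ exactly at $p=1$, closing the argument in every other case.
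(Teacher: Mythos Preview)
Your argument is correct and essentially identical to the paper's: the maps $\bar\phi_\lambda,\bar\psi_\lambda$ are precisely the paper's $\sigma,\tau$, the dissecting hyperplane $\{\lambda x_2=(1-\lambda)x_3\}$ is the same, and the pointwise evaluation via (\ref{eqn1}) yields the same scalar identity $\mu(sT',\pm e_1)=(\lambda^p+(1-\lambda)^p)\,\mu(sT',\pm e_1)$. The only cosmetic difference is that you single out $\lambda=1/2$ to witness $1-\lambda^p-(1-\lambda)^p\neq 0$, whereas the paper simply invokes $p\neq 1$.
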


\begin{proof}
For $\lambda\in(0,1)$ let $H_{\lambda}$ denote the hyperplane containing the origin with normal vector $\lambda e_2- (1-\lambda)e_3$. The valuation property of $\mu$ yields
\begin{equation}\label{val1}
\mu(sT',\cdot)+\mu(sT'\cap H_{\lambda},\cdot)=\mu(sT'\cap H_{\lambda}^+,\cdot)+\mu(sT'\cap H_{\lambda}^-,\cdot).
\end{equation}
Define maps $\sigma,\tau \in\sln$ by
\[\sigma e_1=\frac 1\lambda e_1,\quad\sigma e_2=e_2,\quad\sigma e_3=(1-\lambda)e_2+\lambda e_3,\quad\sigma e_k=e_k,\quad 4\leq k\leq n,\]
and
\[\tau e_1=\frac {1}{1-\lambda} e_1,\quad\tau e_2=(1-\lambda)e_2+\lambda e_3,\quad\tau e_k=e_k,\quad 3\leq k\leq n.\]
Since
\[sT'\cap H_{\lambda}^+=\sigma (sT')\quad\textnormal{and}\quad sT'\cap H_{\lambda}^-=\tau (sT')\]
and since $\mu$ vanishes on $sT'\cap H_{\lambda}$, we have
\[\mu(sT',\cdot)=\mu(\sigma (sT'),\cdot)+\mu(\tau (sT'),\cdot).\]
Thus by (\ref{eqn1}) we obtain
\begin{eqnarray*}
\mu(sT',\pm e_1)&=&\mu(\sigma (sT'),\pm e_1)+\mu(\tau (sT'),\pm e_1)\\
&=&\mu(sT',\pm \langle\sigma^t e_1\rangle)|\sigma^t e_1|^{-p}+\mu(sT',\pm\langle\tau^t e_1\rangle)|\tau^t e_1|^{-p}\\
&=&(\lambda^p +(1-\lambda)^p)\mu(sT',\pm e_1).
\end{eqnarray*}
Since $p\neq 1$, we get $\mu(sT',\pm e_1)=0$.
\end{proof}

\subsection{The discrete case}

The next result concerns simplicity for valuations with discrete images.

\begin{lemma}\label{simpl2} Let $p\neq 1$ and suppose that $\mu:\cP_o^n\to\cM^d(\sn)$ is $\sln$ contravariant of degree $p$. Then
$\mu(sT',\cdot)=0$
for every $s\in\R^+$ and $\mu(\{o\},\cdot)=0$.
\end{lemma}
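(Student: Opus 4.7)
The plan is to mimic the proof of Lemma \ref{lower1}, then invoke Lemma \ref{lemma1} to finish.

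The core observation is a support reduction: if $P\in\cP_o^n$ lies in some coordinate hyperplane $e_k^\bot$, then $\mu(P,\cdot)$ is concentrated at $\pm e_k$. To see this, set $f(P,x)=\mu(P,\langle x\rangle)|x|^{-p}$ as in Lemma \ref{lower1}. For any $j\neq k$ and $t\in\R$, the map $\phi\in\sln$ defined by $\phi e_k = e_k+te_j$ and $\phi e_i=e_i$ for $i\neq k$ fixes $P$. Relation \eqref{eqn1} gives
\[ f(P,x) = f(\phi P,x) = f(P,\phi^t x) = f(P,x+tx_j e_k). \]
If some coordinate $x_j$ with $j\neq k$ is nonzero, then as $t$ ranges over $\R$ the vectors $\langle x+tx_j e_k\rangle$ trace out uncountably many points on $\sn$. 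Since $\mu(P,\cdot)\in\cM^d(\sn)$, we would need all but countably many of these values to vanish; by continuity in $t$ of the direction, this forces $f(P,x)=0$. Therefore $f(P,\cdot)$ is supported on $\lin\{e_k\}$, i.e.\ $\mu(P,\cdot)$ is supported at $\pm e_k$.

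Applied to $P=\{o\}\subset e_k^\bot$ for every $k=1,\dots,n$, this forces $\mu(\{o\},\cdot)$ to be concentrated at $\pm e_k$ for each $k$, and hence $\mu(\{o\},\cdot)=0$.

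Next, I will show $\mu$ is actually simple in the sense required by Lemma \ref{lemma1}, i.e.\ it vanishes on polytopes of dimension at most $n-2$. Let $Q\in\cP_o^n$ with $\dim Q\leq n-2$. Since $o\in Q$, the polytope $Q$ sits in a linear subspace $V\subset\R^n$ with $\dim V\leq n-2$. Choose a basis of $V$, extend to a basis of $\R^n$, and rescale the last basis vector (which lies outside $V$) so that the resulting map $\psi\in\gln$ has determinant one, hence $\psi\in\sln$; by construction $\psi^{-1}Q\subset\lin\{e_1,\dots,e_{n-2}\}=e_{n-1}^\bot\cap e_n^\bot$. The support reduction above, applied to $\psi^{-1}Q$ with $k=n-1$ and with $k=n$, shows that $\mu(\psi^{-1}Q,\cdot)$ is simultaneously supported at $\{\pm e_{n-1}\}$ and at $\{\pm e_n\}$, hence equals zero. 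By \eqref{eqn1} this gives $\mu(Q,\cdot)=0$.

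It now remains only to treat $sT'$. Since $sT'\subset e_1^\bot$, the support reduction says $\mu(sT',\cdot)$ is supported at $\pm e_1$. Because $\mu$ is $\sln$ contravariant of degree $p$ and vanishes on all polytopes of dimension at most $n-2$, the hypothesis of Lemma \ref{lemma1} is satisfied. That lemma then directly yields $\mu(sT',\cdot)=0$ for every $s\in\R_+$. The only nontrivial step is the support reduction; everything else is a quick bookkeeping exercise, and the use of the discreteness assumption is exactly what makes the countable-support contradiction work.
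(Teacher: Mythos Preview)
Your proof is correct and follows essentially the same approach as the paper's: the key support-reduction argument via the shear $\phi e_k=e_k+te_j$ and the countable-support contradiction is identical, and both proofs then appeal to Lemma~\ref{lemma1}. You are simply more explicit than the paper about two points it leaves implicit---that the support reduction applied to $\{o\}$ (which lies in every $e_k^\bot$) gives $\mu(\{o\},\cdot)=0$, and that an $\sln$ change of coordinates plus the support reduction forces $\mu$ to vanish on polytopes of dimension at most $n-2$, so that the hypotheses of Lemma~\ref{lemma1} are actually met.
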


\begin{proof}  By Lemma \ref{lemma1} it is enough to show that $\mu(P,\cdot)$ is supported at $\pm e_k$ provided that $P\subset e_k^{\bot}$. So let $P\subset e_k^{\bot}$ and set $f(P,x)=\mu(P,\langle x\rangle)|x|^{-p}$ for $x\in\R^n\backslash\{o\}$. Suppose that $x\in\R^n$ is given with $x_j\neq 0$ for some $j\neq k$. For $t\in\R$ define $\phi\in\sln$ by
\[\phi e_k=e_k+te_j,\quad\phi e_i=e_i,\quad i\in\{1,\ldots,n\}\backslash\{k\}.\]
Since $\phi P=P$, we get from (\ref{eqn1}) that
\[f(P,x)=f(\phi P,x)=f(P,\phi^t x)=f(P,(x_1,\ldots, x_{k-1},x_k+tx_j,x_{k+1},\ldots,x_n)).
\]
If $f(P,x)\neq 0$, then 
\[f(P,(x_1,\ldots, x_{k-1},x_k+tx_j,x_{k+1},\ldots,x_n))\neq 0\] for all $t\in\R$.
Since these correspond to uncountably many points on $\sn$ and $f(P,\cdot)$ restricted to $\sn$ has countable support, $f(P,x)$ has to be zero. Consequently, $f(P,\cdot)$ is supported only at $\pm e_k$. \end{proof}

The main result of this section is the following classification of $\sln$ contravariant valuations of degree $p$ with discrete images.

\begin{theorem}\label{main3} Let $p\in\R\backslash\{1\}$. A map $\mu:\cP_o^n\to\cM^d(\sn)$ is an $\sln$ contravariant valuation of degree $p$ if and only if there exist nonnegative constants $c_1,c_2\in\R$ such that
\[\mu(P,\cdot)=c_1S_p(P,\cdot)+c_2S_p(-P,\cdot)\]
for every $P\in\cP_o^n$.
\end{theorem}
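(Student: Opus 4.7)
The strategy mirrors the proof of Theorem~\ref{main1} but is cleaner because Lemma~\ref{simpl2} already kills $\mu(\{o\},\cdot)$ and $\mu(sT',\cdot)$: no subtraction is needed, and the target is that $\mu(sT^n,\cdot)$ is supported only at $\pm\langle e_1+\cdots+e_n\rangle$. By Lemma~\ref{ext} it suffices to determine $\mu$ on positive multiples of $T^n$.

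First I would set up the functional equation. For every $\lambda\in(0,1)$ the $(n-1)$-simplex $s\phi_\lambda T'$ is the image of $T'$ under a suitably chosen $\tau\in\sln$ (one has enough freedom in the first column of $\tau$ to normalise the determinant to $1$), so $\sln$ contravariance together with Lemma~\ref{simpl2} forces $\mu(s\phi_\lambda T',\cdot)=0$. Cutting $sT^n$ by the hyperplane $H_\lambda$ and invoking the valuation property therefore gives $\mu(sT^n,\cdot)=\mu(s\phi_\lambda T^n,\cdot)+\mu(s\psi_\lambda T^n,\cdot)$. Unpacking via $\sln$ contravariance, exactly as in the derivation of (\ref{556}), shows that
\[
f(s,x):=\mu(sT^n,\langle x\rangle)|x|^{-p}
\]
satisfies the functional equation (\ref{fundeq}).

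The key step, and the main obstacle, is to verify the hypothesis $f(s,\pm e_1)=0$ of Lemma~\ref{keq}. Even permutations lie in $\sln$, fix $T^n$, and act transitively on $\{e_1,\dots,e_n\}$ for $n\geq3$; combined with Lemma~\ref{homlemma} applied at the (doubly) fixed vector $e_3$, this gives $f(s,\pm e_i)=s^{n-p}f(1,\pm e_1)$ for every $i$. Write $a=f(1,e_1)$ and $b=f(1,-e_1)$. Now I would evaluate (\ref{fundeq}) at $x=x_1e_1+x_2e_2$ with $x_1>0$, $x_2<0$, and $\lambda=x_1/(x_1-x_2)\in(0,1)$. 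A direct computation yields $\phi_\lambda^tx=x_1e_1$ and $\psi_\lambda^tx=x_2e_2$, so homogeneity and the permutation equalities collapse (\ref{fundeq}) to
\[
f(s,x_1e_1+x_2e_2)=\frac{s^{n-p}\bigl(ax_1^{1-p}+b|x_2|^{1-p}\bigr)}{x_1+|x_2|}.
\]
Rescaling by homogeneity to the unit direction $\langle te_1-e_2\rangle$ with $t=x_1/|x_2|\in\R_+$ makes this a positive multiple of $at^{1-p}+b$. Since $t\mapsto\langle te_1-e_2\rangle$ is injective into $\sn$ and $\mu(sT^n,\cdot)$ is discrete, at most countably many $t$ can give a nonzero value, so $at^{1-p}+b$ must vanish for uncountably many $t>0$; because $p\neq 1$, the map $t\mapsto t^{1-p}$ is injective on $\R_+$, forcing $a=b=0$. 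Permuting coordinates then extends this to $f(s,\pm e_i)=0$ for every $i$.

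With these zeros in hand, the remaining hypotheses of Lemma~\ref{keq} are easily checked --- countable support follows from discreteness of $\mu$, $f\geq 0$ supplies the lower bound, and permutation invariance holds because $\pi T^n=T^n$ for every even permutation $\pi$ --- so $f(s,\cdot)$ vanishes off $\lin\{e_1+\cdots+e_n\}$. Consequently $\mu(sT^n,\cdot)$ is supported at $\pm\langle e_1+\cdots+e_n\rangle$, and since $e_1+\cdots+e_n$ is fixed by both $\phi_\lambda^t$ and $\psi_\lambda^t$, Lemma~\ref{homlemma} recovers the $s^{n-p}$ scaling at these two points. Dividing by the corresponding positive values of $S_p(T^n,\langle e_1+\cdots+e_n\rangle)$ and $S_p(-T^n,-\langle e_1+\cdots+e_n\rangle)$ produces nonnegative constants $c_1,c_2$ realising $\mu(sT^n,\cdot)=c_1S_p(sT^n,\cdot)+c_2S_p(-sT^n,\cdot)$; Lemma~\ref{ext} together with $\mu(\{o\},\cdot)=0$ then propagates the identity to all of $\cP_o^n$. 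The converse is immediate, since $S_p$ is $\sln$ contravariant of degree $p$ by the earlier lemma and takes values in $\cM^d(\sn)$ as a finite sum of point masses.
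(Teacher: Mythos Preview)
Your proposal is correct and follows essentially the same route as the paper: reduce via Lemma~\ref{simpl2} and Lemma~\ref{ext} to multiples of $T^n$, derive the functional equation (\ref{fundeq}) for $f(s,x)=\mu(sT^n,\langle x\rangle)|x|^{-p}$, show $f(s,\pm e_1)=0$ by a countable-support argument, apply Lemma~\ref{keq}, and finish with Lemma~\ref{homlemma}. The only tactical difference is in the countable-support step: the paper evaluates (\ref{fundeq}) at $x=e_1$ to obtain $(1-(1-\lambda)^{1-p})\lambda^{-1}f(s,e_1)=f(s,e_1+(1-\lambda)e_2)$ and treats $-e_1$ separately, whereas you evaluate at $x=x_1e_1+x_2e_2$ with opposite signs and $\lambda=x_1/(x_1-x_2)$ so that $\phi_\lambda^tx$ and $\psi_\lambda^tx$ are simultaneously coordinate vectors, handling both signs at once.
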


By Lemmas \ref{ext} and \ref{simpl2}, this theorem will be an immediate consequence of the following 

\begin{lemma}\label{l1} Let $p\neq 1$ and $\mu:\cP_o^n\to\cM(\sn)$ be an $\sln$ contravariant valuation of degree $p$. If, for all $s>0$, $\mu(sT',\cdot)=0$ and $\mu(sT^n,\cdot)$ is discrete, then there exist constants $c_1,c_2\geq0$ such that for all $s>0$
\[\mu(sT^n,\cdot)=c_1S_p(sT^n,\cdot)+c_2S_p(-sT^n,\cdot).\]
\end{lemma}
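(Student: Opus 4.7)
The plan is to apply Lemma \ref{keq} to the function
\[
f(s,x) := \mu(sT^n,\langle x\rangle)\,|x|^{-p},\qquad (s,x)\in\R_+\times\R^n\setminus\{o\},
\]
which will force $\mu(sT^n,\cdot)$ to be concentrated on the pair $\pm\langle e_1+\cdots+e_n\rangle$. First I would verify that $f$ satisfies the functional equation (\ref{fundeq}). The hyperplane through the origin with normal $\lambda e_1-(1-\lambda)e_2$ decomposes $sT^n$ into $s\phi_\lambda T^n$ and $s\psi_\lambda T^n$ meeting along $s\phi_\lambda T'$. Since $\tilde\phi_\lambda:=\lambda^{-1/n}\phi_\lambda\in\sln$ sends $s\lambda^{1/n}T'$ to $s\phi_\lambda T'$, the hypothesis $\mu(tT',\cdot)=0$ together with $\sln$ contravariance yields $\mu(s\phi_\lambda T',\cdot)=0$, so the valuation identity collapses to $\mu(sT^n,\cdot)=\mu(s\phi_\lambda T^n,\cdot)+\mu(s\psi_\lambda T^n,\cdot)$. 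Rewriting each summand via $\sln$ contravariance applied to $\tilde\phi_\lambda$ and $\tilde\psi_\lambda:=(1-\lambda)^{-1/n}\psi_\lambda$ then produces exactly (\ref{fundeq}).

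The remaining hypotheses of Lemma \ref{keq} are routine except for $f(s,\pm e_1)=0$. Homogeneity of degree $-p$ in $x$ is built into the definition, countable support on $\sn$ comes from the discreteness of $\mu(sT^n,\cdot)$, lower boundedness from positivity of $\mu$, and invariance under even coordinate permutations holds because any such $\pi\in\sln$ preserves $T^n$. To prove $f(s,\pm e_1)=0$, I would use an $A_n$ permutation to identify $f(s,\pm e_1)$ with $f(s,\pm e_3)$ and then apply Lemma \ref{homlemma} at the common fixed points $\pm e_3$ of $\phi_\lambda^t$ and $\psi_\lambda^t$ to obtain $f(s,\pm e_1)=s^{n-p}f(1,\pm e_1)$. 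Substituting $x=e_1$ into (\ref{fundeq}) and using $\phi_\lambda^t e_1=e_1+(1-\lambda)e_2$ together with $\psi_\lambda^t e_1=(1-\lambda)e_1$ and the homogeneity in $x$, a short manipulation gives
\[
\lambda^{p/n}f(s\lambda^{1/n},e_1+(1-\lambda)e_2)=s^{n-p}f(1,e_1)\bigl(1-(1-\lambda)^{1-p}\bigr),
\]
and an analogous identity at $x=-e_1$. Since $p\neq 1$, the factor $1-(1-\lambda)^{1-p}$ is nonzero throughout $(0,1)$; fixing $t>0$ and letting $s$ range over $(t,\infty)$ with $\lambda=(t/s)^n$, any non-vanishing value of $f(1,\pm e_1)$ would place uncountably many distinct directions $\pm\langle e_1+(1-(t/s)^n)e_2\rangle$ in the support of the discrete measure $\mu(tT^n,\cdot)$, a contradiction. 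This step -- where the assumption $p\neq 1$ and discreteness of $\mu(sT^n,\cdot)$ are used essentially and simultaneously -- is the one I expect to be the main obstacle.

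With $f(s,\pm e_1)=0$ established, Lemma \ref{keq} yields $f(s,x)=0$ for every $x\notin\lin\{e_1+\cdots+e_n\}$, so $\mu(sT^n,\cdot)$ is supported only on the pair $\pm\langle e_1+\cdots+e_n\rangle$. These two directions are again common fixed points of $\phi_\lambda^t$ and $\psi_\lambda^t$, so Lemma \ref{homlemma} determines the two atoms up to an overall factor $s^{n-p}$. A direct computation of the support function and the $(n-1)$-volume of the diagonal facet shows that $S_p(sT^n,\cdot)$ is a single atom at $\langle e_1+\cdots+e_n\rangle$ with the very same $s^{n-p}$ dependence, and likewise for $S_p(-sT^n,\cdot)$ at the antipode. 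Matching weights produces constants $c_1,c_2$ with
\[
\mu(sT^n,\cdot)=c_1\,S_p(sT^n,\cdot)+c_2\,S_p(-sT^n,\cdot),
\]
and $c_1,c_2\geq 0$ because $\mu$ is a positive measure.
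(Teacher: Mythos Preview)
Your proposal is correct and follows essentially the same route as the paper's proof: derive the functional equation (\ref{fundeq}) from the valuation identity and the vanishing on $sT'$, use Lemma \ref{homlemma} at $e_3$ together with permutation invariance to get $f(s,\pm e_1)=s^{n-p}f(1,\pm e_1)$, plug $x=\pm e_1$ back into (\ref{fundeq}) and exploit discreteness plus $p\neq 1$ to force $f(s,\pm e_1)=0$, then invoke Lemma \ref{keq} and read off the two constants. The only cosmetic difference is that you parameterize the contradiction step by fixing $t=s\lambda^{1/n}$ and varying $s$, whereas the paper fixes $s$ and varies $\lambda$; these are equivalent reparametrizations of the same family of directions $\langle e_1+(1-\lambda)e_2\rangle$.
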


\begin{proof}For $\lambda\in (0,1)$ let $H_{\lambda}$ be the hyperplane through $o$ with normal vector $\lambda e_1-(1-\lambda)e_2$. Note that
\[(sT^n)\,\cap\, H_{\lambda}^+=s\phi_{\lambda} T^n,\quad (sT^n)\,\cap\, H_{\lambda}^-=s\psi_{\lambda} T^n,\quad\textnormal{and}\quad (sT^n)\,\cap\, H_{\lambda}=s\phi_{\lambda} T'.\] Set $f(P,x)=\mu(P,\langle x\rangle)|x|^{-p}$. Then the valuation property of $\mu$ implies that for all $(s,x)\in\R_+\times\R^n\backslash\{o\}$,
\[
f(sT^n,x) + f(s\phi_{\lambda} T', x)= f(s\phi_{\lambda}T^n, x  )+ f(s \psi_{\lambda}T^n,x).
\]
From (\ref{eqn1}) and the assumption that $\mu$ vanishes on multiples of $T'$ we deduce that
\[f(sT^n,x)= \lambda^{\frac pn} f(s\lambda^{\frac1n}T^n,\phi_{\lambda}^t x  )+ (1-\lambda)^{\frac pn}f(s(1-\lambda)^{\frac1n} T^n,\psi_{\lambda}^tx).
\]
Consequently, the map $(s,x)\mapsto f(sT^n,x)$ satisfies (\ref{fundeq}). Evaluating this equality at the first canonical basis vector $e_1$ proves
\begin{equation}\label{eq3}
f(s,e_1)=\lambda^{\frac pn}f(s\lambda^{\frac 1n},e_1+(1-\lambda)e_2)+(1-\lambda)^{\frac pn}f(s(1-\lambda)^{\frac 1n},(1-\lambda)e_1).
\end{equation}
But $e_3$ is a fixpoint of $\phi_{\lambda}^t$ and $\psi_{\lambda}^t$ and $f(s,e_3)=f(s,e_1)$ for all positive $s$, hence by Lemma \ref{homlemma} we get 
\[f(s,e_1)=s^{n-p}f(1,e_1) \quad\textnormal{for all } s>0.\]
Thus we obtain from (\ref{eq3}) that
\[(1-(1-\lambda)^{1-p})f(s,e_1)=\lambda^{\frac pn}f(s\lambda^{\frac 1n},e_1+(1-\lambda)e_2).\]
This shows in particular that 
\[f(s\lambda^{\frac 1n},e_1+(1-\lambda)e_2)=s^{n-p}f(\lambda^{\frac 1n},e_1+(1-\lambda)e_2)\]
for all $s>0$ and each $\lambda\in (0,1)$. Consequently
\[f(s\lambda^{\frac 1n},e_1+(1-\lambda)e_2)=s^{n-p}\lambda^{1-\frac pn}f(1,e_1+(1-\lambda)e_2)\]
and hence
\[(1-(1-\lambda)^{1-p})\lambda^{-1} f(s,e_1)=f(s,e_1+(1-\lambda)e_2).\]
Since $p\neq 1$ and for fixed $s$ the function $f(s,\cdot)$ has countable support if restricted to $\sn$, we see that $f(s,e_1)=0$ for all $s>0$. Similarly, by looking at $-e_1$ instead of $e_1$ in the above argument, we infer $f(s,-e_1)=0$ for all $s>0$. From Lemma \ref{keq} we deduce that $f(s,\cdot)$ is supported only at $\pm \langle e_1+\cdots+e_n\rangle$. Define constants
\[c_1=\frac{f(1,\langle e_1+\cdots+e_n\rangle)}{S_p(T^n,\langle e_1+\cdots+e_n\rangle)}\quad\textnormal{and}\quad c_2=\frac{f(1,-\langle e_1+\cdots+e_n\rangle)}{S_p(-T^n,-\langle e_1+\cdots+e_n\rangle)}.\]
Since $\pm \langle e_1+\cdots+e_n\rangle$ are fixpoints of $\phi_{\lambda}^t$ and $\psi_{\lambda}^t$, we therefore have by Lemma \ref{homlemma} that
\[f(s,x)=c_1S_p(sT^n,x)+c_2S_p(-sT^n,x)\]
for all $s>0$ and all $x\in\sn$. Now, relation (\ref{eqivmeas}) concludes the proof.
\end{proof}

\subsection{The case $p>0$}

\begin{lemma}\label{simpllem1} Let $p\in\R_+\backslash\{1\}$. If $\mu:\cP_o^n\to\cM(\sn)$ is an $\sln$ contravariant valuation of degree $p$, then $\mu(sT',\cdot)=0$ for every $s>0$ and $\mu(\{o\},\cdot)=0$.
\end{lemma}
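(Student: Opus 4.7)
The plan is to reduce the statement to the function-valued classification in Theorem~\ref{funcchar2} by composing $\mu$ with the $L_p$ cosine transform.

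First I would define $\oz P=\C_p\mu(P,\cdot)$, extended from $\sn$ to $\R^n\setminus\{o\}$ by $p$-homogeneity, so that $\oz P\in C_p^+(\R^n)$ for every $P\in\cP_o^n$. The valuation property of $\mu$ transfers to $\oz$ by linearity of the integral, and $\oz$ is even because $|(-x)\cdot v|^p=|x\cdot v|^p$. To verify $\sln$ contravariance in the sense required by Theorem~\ref{funcchar2}, I would apply the defining identity of $\mu$ to the continuous $p$-homogeneous function $v\mapsto|x\cdot v|^p$, which yields
\[\oz(\phi P)(x)=\int_{\sn}|x\cdot v|^p\,d\mu(\phi P,v)=\int_{\sn}|\phi^{-1}x\cdot v|^p\,d\mu(P,v)=\oz P(\phi^{-1}x)\]
for every $\phi\in\sln$. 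Theorem~\ref{funcchar2} then produces a constant $c\in\R$ such that $\C_p\mu(P,\cdot)=\C_p(cS_p(P,\cdot))$ for all $P\in\cP_o^n$.

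Next I would evaluate this identity at $P=sT'$ and at $P=\{o\}$. In both cases $\cN^*(P)=\emptyset$: the singleton $\{o\}$ has no facets, while the only ambient facets of $sT'\subset e_1^\bot$ coincide with $sT'$ itself and hence contain the origin. Consequently $S_p(sT',\cdot)$ and $S_p(\{o\},\cdot)$ are both the zero measure, so
\[\C_p\mu(sT',\cdot)\equiv 0\und\C_p\mu(\{o\},\cdot)\equiv 0.\]

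The remaining step is to recover the vanishing of the measures themselves from the vanishing of their cosine transforms, using the positivity of $\mu$. Writing $\nu$ for either $\mu(sT',\cdot)$ or $\mu(\{o\},\cdot)$ and integrating against spherical Lebesgue measure $du$, Fubini gives
\[0=\int_{\sn}\C_p\nu(u)\,du=\int_{\sn}\int_{\sn}|u\cdot v|^p\,d\nu(v)\,du=\kappa_{n,p}\,\nu(\sn),\]
where $\kappa_{n,p}=\int_{\sn}|u\cdot e_1|^p\,du>0$ is independent of the unit vector by rotation invariance; since $\nu$ is positive, this forces $\nu=0$. The only point that requires real care is the verification that $\oz$ satisfies the hypotheses of Theorem~\ref{funcchar2}; after that the argument is a direct substitution.
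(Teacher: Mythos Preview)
Your proof is correct and takes a genuinely different route from the paper. The paper argues directly: for $P\subset e_k^\bot$ it uses the shear maps $\phi e_k=e_k+te_j$ fixing $P$, evaluates $\mu(P,\sn)=\int|\phi^{-t}x|^p\,d\mu(P,x)$, and applies Fatou's lemma as $t\to\infty$ to force $\mu(P,\{x_j\neq0\})=0$; this pins $\mu(sT',\cdot)$ down to $\pm e_1$, and then Lemma~\ref{lemma1} (which exploits $p\neq1$ via the identity $\lambda^p+(1-\lambda)^p\neq1$) kills those two atoms. Your argument instead pushes $\mu$ through $\C_p$, invokes Theorem~\ref{funcchar2} to identify $\C_p\mu(P,\cdot)$ with $c\,\C_pS_p(P,\cdot)$, observes $S_p(sT',\cdot)=S_p(\{o\},\cdot)=0$, and recovers $\nu=0$ from $\C_p\nu=0$ by integrating against $du$ and using positivity. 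This is slicker and bypasses Lemma~\ref{lemma1} entirely; the trade-off is that it leans on the external classification Theorem~\ref{funcchar2}, whereas the paper's proof of the present lemma is self-contained and keeps the appeal to Theorem~\ref{funcchar2} for the later step where the support of $\mu(sT^n,\cdot)$ is analyzed. Since the paper uses Theorem~\ref{funcchar2} anyway in the proof of Theorem~\ref{main4}, your reorganization costs nothing logically.
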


\begin{proof}By Lemma \ref{lemma1} it is enough to show that $\mu(P,\cdot)$ is supported at $\pm e_k$ provided that $P\subset e_k^{\bot}$. So let $P\subset e_k^{\bot}$. For $t\in\R$ and $j\neq k$ define $\phi\in\sln$ by
\[\phi e_k=e_k+te_j,\quad\phi e_i=e_i,\quad i\in\{1,\ldots,n\}\backslash\{k\}.\]
Note that $\phi P=P$. Therefore the $\sln$ contravariance of $\mu$ implies
\begin{eqnarray*}
\mu(P,\sn)&=&\mu(\phi P,\sn)=\int_{\sn}|x|^p\,d\mu(\phi P,x)\\
&=&\int_{\sn}\left(x_1^2+\cdots+x_{k-1}^2+(x_k-tx_j)^2+x_{k+1}^2+\cdots+x_n^2\right)^{\frac p2}d\mu(P,x)
\end{eqnarray*}
If $\mu(P,\{x_j\neq0\})\neq0$, then Fatou's lemma shows that the last integral goes to infinity as $t$ tends to infinity. But this would contradict the finiteness of $\mu(P,\cdot)$. Thus $\mu(P,\{x_j\neq0\})=0$, which immediately implies that $\mu(P,\cdot)$ is supported only at $\pm e_k$. 
\end{proof}

Now, we establish the characterization of $L_p$ surface area measures for positive $p$ which was already announced in the introduction. We emphasize again that for positive $p$ no homogeneity assumptions are needed. It suffices to assume $\sln$ contravariance instead of $\gln$ contravariance.

\begin{theorem}\label{main4} Let $p\in\R_+\backslash\{1\}$. A map $\mu:\cP_o^n\to\cM(\sn)$ is an $\sln$ contravariant valuation of degree $p$ if and only if there exist nonnegative constants $c_1,c_2\in\R$ such that
\[\mu(P,\cdot)=c_1S_p(P,\cdot)+c_2S_p(-P,\cdot)\]
for every $P\in\cP_o^n$.
\end{theorem}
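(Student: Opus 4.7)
The plan is to reduce the problem to the case of discrete measures (already settled by Theorem \ref{main3}) by showing the continuous part of $\mu$ must vanish on the simplices $sT^n$. First, Lemma \ref{simpllem1} ensures $\mu(sT',\cdot)=0$ for all $s>0$ and $\mu(\{o\},\cdot)=0$, so Lemma \ref{ext} reduces the task to computing $\mu(sT^n,\cdot)$ for $s>0$.

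Next, I would decompose $\mu=\mu^c+\mu^d$ into its continuous and discrete parts. Both pieces inherit the $\sln$ contravariance of degree $p$ and the valuation property from $\mu$: for $\mu^d$ this was observed in Section \ref{not}, and for $\mu^c=\mu-\mu^d$ it then follows by linearity. Applying Theorem \ref{main3} to $\mu^d$ yields nonnegative constants $c_1,c_2$ with $\mu^d(P,\cdot)=c_1 S_p(P,\cdot)+c_2 S_p(-P,\cdot)$ for every $P\in\cP_o^n$.

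The core step is to show $\mu^c(sT^n,\cdot)=0$. I would apply Theorem \ref{funcchar2} to the function-valued map $P\mapsto \C_p\mu^c(P,\cdot)$: since $\mu^c$ is a positive measure, $\C_p\mu^c\in C_p^+(\R^n)$; linearity of $\C_p$ makes it a valuation; the kernel $v\mapsto|u\cdot v|^p$ is even, so $\C_p\mu^c$ is even; and the $p$-homogeneity of the kernel in $v$ combined with the $\sln$ contravariance of $\mu^c$ gives $\C_p\mu^c(\phi P,u)=\C_p\mu^c(P,\phi^{-1}u)$. Theorem \ref{funcchar2} then produces a constant $c''$ with $\C_p\mu^c(P,\cdot)=c''\,\C_p S_p(P,\cdot)$ for every $P\in\cP_o^n$.

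The hard part is extracting information about $\mu^c(sT^n,\cdot)$ from this cosine-transform identity, given how much $\C_p$ annihilates in general. The key observation is that $S_p(sT^n,\cdot)$ is concentrated at the single point $v_0=\langle e_1+\cdots+e_n\rangle$, so the right-hand side reduces to a fixed multiple of $|u\cdot v_0|^p$. For any $u\in v_0^{\bot}\cap\sn$ the right-hand side vanishes, and since the integrand $|u\cdot v|^p$ on the left is non-negative and $\mu^c$ is a positive measure, $\mu^c(sT^n,\cdot)$ must be supported in $u^{\bot}\cap\sn$. Intersecting these supports over a spanning set of $u\in v_0^{\bot}$ confines $\mu^c(sT^n,\cdot)$ to $\{\pm v_0\}$; but $\mu^c$ has no atoms, hence $\mu^c(sT^n,\cdot)=0$. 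Combining with the discrete case and invoking Lemma \ref{ext} (applied to the $\sln$ contravariant valuation of degree $p$ given by $P\mapsto c_1 S_p(P,\cdot)+c_2 S_p(-P,\cdot)$) completes the proof; non-negativity of $c_1,c_2$ is inherited directly from Theorem \ref{main3}.
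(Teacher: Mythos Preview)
Your proof is correct and follows essentially the same route as the paper: use Theorem~\ref{funcchar2} together with the fact that $S_p(sT^n,\cdot)$ is concentrated at $v_0=\langle e_1+\cdots+e_n\rangle$ to conclude, via vanishing of $\C_p$ at directions in $v_0^\perp$ (the paper uses $u=e_i-e_j$), that the measure on $sT^n$ is supported at $\{\pm v_0\}$, then invoke the discrete classification and Lemma~\ref{ext}. The only difference is organizational: the paper applies this cosine-transform argument directly to $\mu$, which already shows $\mu(sT^n,\cdot)$ itself is discrete and so allows Lemma~\ref{l1} to be applied without your preliminary decomposition $\mu=\mu^c+\mu^d$ and the separate appeal to Theorem~\ref{main3}.
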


\begin{proof}
Note that $\C_p\mu(P,\cdot):\cP_o^n\to C_p^+(\R^n)$ is an even $\sln$ contravariant valuation. Thus, by Theorem \ref{funcchar2}, we have
\[\C_p\mu(sT^n,\cdot)(e_i-e_j)=c\,\C_p S_p(sT^n,\cdot)(e_i-e_j)=0.\]
for $1\leq i\neq j\leq n$. So $\mu(sT^n,\cdot)$ has to be concentrated on each hyperplane $\{x_i=x_j\}$, and consequently it is concentrated at the two points $\pm\langle e_1+\cdots+e_n\rangle$. By Lemmas \ref{l1} and \ref{simpllem1} there exist nonnegative constants $c_1$ and $c_2$ such that
\[\mu(sT^n,x)=c_1S_p(sT^n,x)+c_2S_p(-sT^n,x)\]
for all $s>0$ and $x\in\sn$. Thus, Lemma \ref{ext} concludes the proof.
\end{proof}

\subsection{The case $p\leq 0$}

Next, we prove the $\gln$ contravariant case for non-positive $p$. We start with the simplicity in this case.
\begin{lemma}\label{simpl22} Let $p\leq 0$ and $\mu:\cP_o^n\to\cM(\sn)$ be a $\gln$ contravariant map of degree $p$. Then $\mu$ is simple, i.e. it vanishes on polytopes of dimension less than $n$. 
\end{lemma}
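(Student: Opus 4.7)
The strategy is to exploit the large $\gln$-stabilizer that any lower-dimensional polytope in $\cP_o^n$ possesses: in sharp contrast to the $\sln$ case, this stabilizer contains elements of arbitrary positive determinant, and feeding such an element into the $\gln$-contravariance identity while sending the determinant to $0$ collapses the whole argument into a single limit. No analogue of the functional-equation analysis of Section~3 is needed here, which is precisely why $\gln$ contravariance is easier than $\sln$ contravariance in this range.

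In detail, let $P\in\cP_o^n$ with $k:=\dim P<n$, and set $L:=\lin P$; since $o\in P$, $L$ is a linear subspace. For each $\lambda\in(0,1]$ define $\phi_\lambda\in\gln$ to act as the identity on $L$ and as the homothety of ratio $\lambda^{1/(n-k)}$ on $L^{\perp}$. Then $\phi_\lambda P=P$ and $|\det\phi_\lambda|=\lambda$, so $\gln$ contravariance of degree $p$ gives
\[
\int_{\sn}f\,d\mu(P,\cdot)=\lambda\int_{\sn}f\circ\phi_\lambda^{-t}\,d\mu(P,\cdot)
\]
for every continuous $p$-homogeneous $f\colon\R^n\setminus\{o\}\to\R$.

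Next I would derive a uniform $L^\infty$ bound on the integrand. Since $\phi_\lambda$ is symmetric, $\phi_\lambda^{-t}u=u_L+\lambda^{-1/(n-k)}u_{\perp}$, where $u=u_L+u_{\perp}$ is the orthogonal decomposition of $u\in\sn$. For $\lambda\leq 1$ this yields $|\phi_\lambda^{-t}u|^2\geq|u_L|^2+|u_{\perp}|^2=1$, and the sign assumption $p\leq 0$ then forces $|\phi_\lambda^{-t}u|^p\leq 1$. Combined with the $p$-homogeneity of $f$, this gives $|f\circ\phi_\lambda^{-t}|\leq\max_{\sn}|f|$ uniformly in $\lambda\in(0,1]$ and $u\in\sn$, so the right-hand side of the displayed identity is bounded by $\lambda\,\max_{\sn}|f|\cdot\mu(P,\sn)$, which tends to $0$ as $\lambda\to 0^+$. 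Hence $\int_{\sn}f\,d\mu(P,\cdot)=0$ for every continuous $p$-homogeneous $f$; since every continuous $g$ on $\sn$ extends to such an $f$ via $x\mapsto|x|^p g(x/|x|)$, Riesz's representation theorem yields $\mu(P,\cdot)=0$.

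The only delicate point is the uniform bound on $f\circ\phi_\lambda^{-t}$: the map $\phi_\lambda^{-t}$ blows up on $L^\perp$ as $\lambda\to 0$, and the hypothesis $p\leq 0$ is exactly what converts this blow-up into the harmless estimate $|\phi_\lambda^{-t}u|^p\leq 1$. Without this sign condition the inequality would go the wrong way and the integral on the right would be uncontrollable, which explains why the range $p\leq 0$ has to be handled separately from the $p>0$ simplicity argument of Lemma~\ref{simpllem1}.
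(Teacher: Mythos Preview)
Your proof is correct and follows essentially the same approach as the paper: pick a linear map fixing $P$ that scales the orthogonal complement of $\lin P$, use $\gln$ contravariance to pull out the determinant factor, bound the integrand using $p\le 0$, and let the scaling parameter degenerate. The paper specializes to $P\subset e_1^{\perp}$, uses only the test function $|x|^p$ (so it works directly with the total mass $\mu(P,\sn)$), and sends the scale $s\to\infty$ rather than $\lambda\to 0$; these are cosmetic differences, and your version avoids the WLOG reduction at the cost of carrying a general test function through.
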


\begin{proof} Without loss of generality we assume that $P\subset e_1^{\bot}$. For $s>0$ define $\phi\in\gln$ by
\[\phi e_1=se_1,\qquad\phi e_k=e_k,\qquad 2\leq k\leq n.\]
Since $\phi P=P$, the $\gln$ contravariance of $\mu$ yields
\begin{eqnarray*}
\mu(P,\sn)&=&\mu(\phi P,\sn)=\int_{\sn}|x|^p\,d\mu(\phi P,x)\\
&=&s\int_{\sn}\left(\frac{x_1^2}{s^2}+x_2^2+\cdots+x_n^2\right)^{\frac p2}\mu(P,x).
\end{eqnarray*}
Note that the integrand of the last integral is equal to $\left( 1+x_1^2(1/s^2-1) \right)^{p/2}$. Clearly, this function is greater than or equal to $(1+1/s^2)^{p/2}$ and hence
\[\mu(P,\sn)\geq \mu(P,\sn)s\left(1+\frac{1}{s^2}\right)^{\frac p2}.\]
Now take the limit $s\to\infty$ and use the fact that $\mu(P,\cdot)$ is a finite measure in order to arrive at $\mu(P,\sn)=0$. Hence $\mu(P,\cdot)=0$.
\end{proof}

We are now in a position to prove Theorem \ref{main5} for non-positive $p$.

\begin{lemma}\label{main6} Let $p\leq0$. A map $\mu:\cP_o^n\to\cM(\sn)$ is a $\gln$ contravariant valuation of degree $p$ if and only if there exist nonnegative constants $c_1,c_2\in\R$ such that
\[\mu(P,\cdot)=c_1S_p(P,\cdot)+c_2S_p(-P,\cdot)\]
for every $P\in\cP_o^n$.
\end{lemma}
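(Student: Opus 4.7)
The direction $(\Leftarrow)$ follows from the lemma in Section \ref{not} showing that $S_p(\cdot,\cdot)$ and $S_p(-\cdot,\cdot)$ are $\gln$-contravariant valuations of degree $p$ for $p\neq 1$, since both properties are preserved under nonnegative linear combinations. For $(\Rightarrow)$, let $\mu:\cP_o^n\to\cM(\sn)$ be $\gln$-contravariant of degree $p\leq 0$. By Lemma \ref{simpl22}, $\mu$ is simple, so $\mu(\{o\},\cdot)=0$ and $\mu(sT',\cdot)=0$ for every $s>0$. Decompose $\mu=\mu^d+\mu^c$ via (\ref{mudec}); as explained after that decomposition, $\mu^d$ is a valuation, and the $\gln$-analog of (\ref{eqn1}) (obtained via Riesz uniqueness applied to $p$-homogeneous extensions of point indicators) shows $\mu^d$ is $\gln$-contravariant of degree $p$, and hence so is $\mu^c$. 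Since $\mu^d$ takes discrete values and is $\sln$-contravariant of degree $p\neq 1$, Theorem \ref{main3} gives nonnegative constants $c_1,c_2$ with $\mu^d(P,\cdot)=c_1 S_p(P,\cdot)+c_2 S_p(-P,\cdot)$ for every $P\in\cP_o^n$.

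It remains to prove $\mu^c\equiv 0$. Applying $\gln$-contravariance with the dilation $\phi=sI$ gives $\mu^c(sT^n,\cdot)=s^{n-p}\mu^c(T^n,\cdot)$, so by Lemma \ref{ext} together with the simplicity of $\mu^c$, it suffices to show that $\nu:=\mu^c(T^n,\cdot)=0$. Splitting $T^n$ by the hyperplane $H_\lambda$, eliminating $\mu^c(\phi_\lambda T',\cdot)=0$ by simplicity, and applying $\gln$-contravariance with $\phi_\lambda$ and $\psi_\lambda$ analogously to the derivation in Lemma \ref{l1}, we obtain the functional equation
\[\int f\, d\nu = \lambda \int f\circ\phi_\lambda^{-t}\, d\nu + (1-\lambda)\int f\circ\psi_\lambda^{-t}\, d\nu\]
valid for every continuous $f$ on $\sn$ (extended $p$-homogeneously to $\R^n\setminus\{o\}$) and every $\lambda\in(0,1)$. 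Moreover, evaluating $\gln$-contravariance on permutation matrices $\pi\in\gln$ (which satisfy $\pi T^n=T^n$ and $|\det\pi|=1$) shows that $\nu$ is invariant under the natural action of the symmetric group $S_n$ on $\sn$.

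The main obstacle is now to deduce $\nu=0$ from this functional equation, the $S_n$-invariance, and the continuity of $\nu$. My proposed strategy proceeds in two stages. First, using the stretching $\alpha_s=\textnormal{diag}(s,1,\ldots,1)\in\gln$, the direct identity $\mu^c(\alpha_s T^n,\sn)=s\int(1+x_1^2(s^{-2}-1))^{p/2}\,d\nu(x)$, combined with the dilation-homogeneity factorization $\alpha_s=s^{1/n}\psi_s$ where $\psi_s\in\sln$, together with the limits $s\to 0$ and $s\to\infty$, should pin down that $\nu$ puts no mass on the coordinate equators $\{x_i=0\}$; this is analogous in spirit to the stretching argument in the proof of Lemma \ref{simpl22}, with continuity of $\nu$ used to control boundary contributions. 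Second, adapt the iterative scheme of Lemma \ref{keq}---originally stated for functions with countable support in the second argument---to an integrated, continuous-measure formulation by testing the functional equation against continuous functions supported on small spherical caps centered at points outside $\{\pm\langle e_1+\cdots+e_n\rangle\}$ and propagating vanishing via the transforms $\phi_\lambda^{-t}$ and $\psi_\lambda^{-t}$. This should force $\nu$ to be concentrated on the two-point set $\{\pm\langle e_1+\cdots+e_n\rangle\}$; continuity of $\nu$ then gives $\nu=0$. Carrying out this continuous-measure analog of Lemma \ref{keq} rigorously is the technical heart of the argument.
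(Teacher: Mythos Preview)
Your setup is sound: the reverse implication, simplicity via Lemma~\ref{simpl22}, and the functional equation
\[
\int f\,d\nu=\lambda\int f\circ\phi_\lambda^{-t}\,d\nu+(1-\lambda)\int f\circ\psi_\lambda^{-t}\,d\nu
\]
for $\nu=\mu(T^n,\cdot)$ are all correct, as is the $S_n$-invariance of $\nu$. However, your two-stage plan for the continuous part has a genuine gap. Stage~2 asks for a ``continuous-measure analog of Lemma~\ref{keq}'', but the proof of Lemma~\ref{keq} uses the countable-support hypothesis in an essential, non-local way: one deduces a contradiction from the fact that a one-parameter family of values $f(1,\lambda e_1+e_2+\cdots+e_{j+2})$ would all be nonzero. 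Testing against bump functions on small caps does not reproduce this mechanism, and you have not indicated a replacement. As written, this is the entire content of the lemma, and it is left as a program rather than an argument.

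The paper avoids this obstacle by a different and much more direct idea, applied to $\mu(T^n,\cdot)$ itself (no discrete/continuous decomposition is needed). Rewrite the functional equation as
\[
0=\int f\circ\phi_\lambda^{-t}\,d\nu+\int\frac{(1-\lambda)(f\circ\psi_\lambda^{-t})-f}{\lambda}\,d\nu,
\]
choose the single explicit test function $f_p(x)=\bigl((x_1-x_2)^2+x_2^2+\cdots+x_n^2\bigr)^{p/2}$ for $p<0$ (and $|x_3|f_{-1}(x)$ for $p=0$), and let $\lambda\to 0^+$. The second integrand converges to a derivative that is $\leq 0$ with equality exactly on $\{x_1=x_2\}$, while the first integrand converges to $\mathbb{I}_{\{x_1=x_2\}}f_p$. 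After justifying the limit interchange via dominated convergence (which requires a short but nontrivial uniform bound for $f_p\circ\phi_\lambda^{-t}$ on $(0,1)\times\sn$), one obtains that the integrand is nonpositive with integral zero, forcing $\nu$ to be concentrated on $\{x_1=x_2\}$. By $S_n$-invariance, $\nu$ is then concentrated at $\pm\langle e_1+\cdots+e_n\rangle$, hence discrete, and Lemma~\ref{l1} finishes. The decisive idea you are missing is this explicit test function plus the $\lambda\to 0^+$ limit; your cap-testing scheme does not supply it.
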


\begin{proof} For $\lambda\in (0,1)$ let $H_{\lambda}$ be the hyperplane through $o$ with normal vector $\lambda e_1-(1-\lambda)e_2$. Recall that
\[T^n\,\cap\, H_{\lambda}^+=\phi_{\lambda} T^n,\quad T^n\,\cap\, H_{\lambda}^-=\psi_{\lambda} T^n\quad\textnormal{and}\quad T^n\,\cap\, H_{\lambda}=\phi_{\lambda} T'.\]
The valuation property, the simplicity derived in Lemma \ref{simpl22}, and the $\gln$ contravariance of $\mu$ yield
\[
\int_{\sn}f\, d\mu(T^n,\cdot)=\lambda\int_{\sn}f\circ\phi_{\lambda}^{-t}\, d\mu(T^n,\cdot)+(1-\lambda)\int_{\sn}f\circ\psi_{\lambda}^{-t}\, d\mu(T^n,\cdot).
\]
Thus, for every continuous $p$-homogeneous $f:\R^n\backslash\{o\}\to\R$, we have
\begin{equation}\label{fundeq3}
0=\int_{\sn}f\circ\phi_{\lambda}^{-t}\,d\mu(T^n,\cdot)+\int_{\sn}\frac{(1-\lambda)(f\circ\psi_{\lambda}^{-t})-f}{\lambda}\,d\mu(T^n,\cdot).
\end{equation}
First, assume $p<0$. Define $f_p(x)=\left((x_1-x_2)^2+x_2^2+\cdots+x_n^2\right)^{p/2}$ for $x\in\R^n\backslash\{o\}$. Note that $f_p$ is strictly positive and $p$-homogeneous. Set
\[g_p(\lambda,x)=f_p\circ\psi_{\lambda}^{-t}=\left(\left(\frac{x_1-x_2}{1-\lambda}\right)^2+x_2^2+\cdots+x_n^2\right)^{\frac p2}.\]
An elementary calculation shows that
\begin{equation}\label{eq77}\frac{\partial}{\partial \lambda}(1-\lambda)g_p(\lambda, x)=-g_p(\lambda,x)+p\,g_p(\lambda,x)^{1-\frac2p}\frac{(x_1-x_2)^2}{(1-\lambda)^2},
\end{equation}
and hence
\begin{eqnarray}
\lim_{\lambda\to 0^+}\frac{(1-\lambda)(f_p\circ\psi_{\lambda}^{-t})-f_p}{\lambda}&=&\left.\frac{\partial }{\partial\lambda}(1-\lambda)g_p(\lambda,x)\right|_{\lambda=0}\label{eq100}\\
&=&-f_p(x)+pf_p(x)^{1-\frac 2p}(x_1-x_2)^2.\nonumber
\end{eqnarray}
Clearly we have
\begin{eqnarray*}
\lim_{\lambda\to 0^+}(f_p\circ\phi_{\lambda}^{-t})(x)&=&\lim_{\lambda\to 0^+}\left(\left(\frac{x_1-x_2}{\lambda}\right)^2+\left(\frac{x_2-x_1}{\lambda}+x_1\right)^2+x_3^2+\cdots+x_n^2\right)^{\frac p2}\\
&=&\mathbb{I}_{\{x_1=x_2\}}(x)f_p(x).
\end{eqnarray*}
If we take the limit $\lambda\to 0^+$ in (\ref{fundeq3}) and are allowed to interchange limit and integrals, then
\[0=\int_{\sn}f_p(x)\left(\mathbb{I}_{\{x_1=x_2\}}(x)-1+pf_p(x)^{-\frac 2p}(x_1-x_2)^2\right)\,d\mu(T^n,x).\]
Since the last integrand is less than or equal to zero and equal to zero precisely for points $x$ with $x_1=x_2$, we deduce that $\mu(T^n,\cdot)$ is concentrated on $\{x_1=x_2\}$. Since by the $\gln$ contravariance $\mu(T^n,\cdot)$ is invariant with respect to coordinate changes we get that $\mu(T^n,\cdot)$ is concentrated at the points $\pm\langle e_1+\cdots+e_n\rangle$. In particular, $\mu(T^n,\cdot)$ is discrete. 

We still have to prove that we are actually allowed to interchange limit and integrals in the above argument. Since $g_p(\lambda,x)$ is obviously  bounded on $(0,1/2]\times\sn$, equations (\ref{eq77}) and (\ref{eq100}), the mean value theorem, and the dominated convergence theorem show that we can interchange limit and integration in the second integral. In order to apply the dominated convergence theorem for the first integral it is enough to prove that the function $f_p\circ\phi_{\lambda}^{-t}$ is bounded from above on $(0,1)\times\sn$. Recall that
\[f_p\circ\phi_{\lambda}^{-t}(x)=\left(\left(\frac{x_1-x_2}{\lambda}\right)^2+\left(x_1+\frac{x_2-x_1}{\lambda}\right)^2+x_3^2+\cdots+x_n^2\right)^{\frac p2}.\]
Define a set
\[U=\{x\in\sn:\,\,(x_1-x_2)^2\geq 1/25\,\,\textnormal{ or }\,\, x_3^2+\cdots+x_n^2\geq 1/25\}.\]
By the negativity of $p$, the function $f_p\circ\phi_{\lambda}^{-t}$ is obviously bounded on $(0,1)\times U$. So suppose that $x\in \sn\backslash U$, i.e. $(x_1-x_2)^2< 1/25$ and $x_3^2+\cdots+x_n^2< 1/25$. From these two inequalities and the fact that $(x_1-x_2)^2+2x_1x_2+x_3^2+\cdots+x_n^2=1$ we get $2x_1x_2>1-2/25=23/25$. Since $|x_2|\leq 1$ we further obtain
\begin{equation}\label{eq97}
|x_1|>\frac{23}{50}.
\end{equation}
Assume that there exists a $\lambda\in(0,1)$ such that
\[\left(\frac{x_1-x_2}{\lambda}\right)^2+\left(x_1+\frac{x_2-x_1}{\lambda}\right)^2+x_3^2+\cdots+x_n^2<\frac{1}{25}.\]
Then we obviously must have
\begin{equation}\label{eq87}\left(\frac{x_1-x_2}{\lambda}\right)^2<\frac{1}{25}\end{equation}
as well as
\begin{equation}\label{eq98}
\left(x_1+\frac{x_2-x_1}{\lambda}\right)^2<\frac{1}{25}.
\end{equation}
But by the reverse triangle inequality, (\ref{eq97}), and (\ref{eq87}) we obtain
\[\left|x_1+\frac{x_2-x_1}{\lambda}\right|\geq|x_1|-\left|\frac{x_2-x_1}{\lambda}\right|>\frac 15,\]
which contradicts (\ref{eq98}). Consequently, $f_p\circ\phi_{\lambda}^{-t}$ is bounded also on $(0,1)\times\sn\backslash U$, which immediately gives the desired result.

For the case $p=0$ set $h(x)=|x_3|f_{-1}(x)$, $x\in\R^n\backslash\{o\}$. Note that $h$ is a nonnegative continuous $0$-homogeneous function. Thus
\begin{eqnarray*}
0&=&\int_{\sn}h\circ\phi_{\lambda}^{-t}\,d\mu(T^n,\cdot)+\int_{\sn}\frac{(1-\lambda)(h\circ\psi_{\lambda}^{-t})-h}{\lambda}\,d\mu(T^n,\cdot)\\
&=&\int_{\sn}|x_3|(f_{-1}\circ\phi_{\lambda}^{-t})\,d\mu(T^n,\cdot)+\int_{\sn}|x_3|\frac{(1-\lambda)(f_{-1}\circ\psi_{\lambda}^{-t})-f_{-1}}{\lambda}\,d\mu(T^n,\cdot)\end{eqnarray*}
As before, we can take the limit $\lambda\to 0^+$ in this equation and interchange limit and integration in order to arrive at
\[0=\int_{\sn}|x_3|f_{-1}(x)\left(\mathbb{I}_{\{x_1=x_2\}}(x)-1-f_{-1}(x)^{2}(x_1-x_2)^2\right)d\mu(T^n,\cdot).\]
Thus $\mu(T^n,\cdot)$ has to be concentrated on $\{x_3=0\}\cup\{x_1=x_2\}$. Since $\mu(T^n,\cdot)$ is invariant under coordinate changes, $\mu(T^n,\cdot)$ has to be supported at 
\[\bigcap\big\{\{x_i=0\}\cup\{x_j=x_k\}: 1\leq i,j,k\leq n\, \textnormal{ with distinct } i,j,k\big\}.\]
Suppose that $x\in\sn$ is a point in this intersection. Then either all coordinates of $x$ are equal, or at least two are different, say $x_1$ and $x_2$. But then all other coordinates have to be zero. At least one of $x_1$ and $x_2$ has to be nonzero, say $x_1$. Since $x\in\{x_1=0\}\cup\{x_2=x_3\}$, we have $x_2=0$ and thus $x_1=\pm1$. This implies that $\mu(T^n,\cdot)$ is concentrated at the points $\pm e_i$ and $\pm\langle e_1+\cdots+e_n\rangle$. So also in this case $\mu(T^n,\cdot)$ is discrete.

By Lemmas \ref{l1} and \ref{simpl22} we infer that there exist constants $c_1,c_2\geq 0$ such that for all $s>0$ 
\[\mu(sT^n,\cdot)=c_1S_p(sT^n,\cdot)+c_2S_p(-sT^n,\cdot).\]
By the simplicity of $\mu$ we also have $\mu(\{o\},\cdot)=0$. Lemma \ref{ext} therefore concludes the proof.
\end{proof}

\section*{Acknowledgments}
The work of the authors was supported by Austrian Science Fund (FWF) Project P23639-N18.

\bibliography{proposal} 

\bibliographystyle{dissbib}

\noindent
Christoph Haberl \\
University of Salzburg \\
Hellbrunnerstra\ss{}e 34 \\
5020 Salzburg, Austria \\
christoph.haberl@sbg.ac.at \\

\noindent
Lukas Parapatits \\
Vienna University of Technology \\
Institute of Discrete Mathematics and Geometry \\
Wiedner Hauptstra\ss{}e 8-10/1046 \\
1040 Vienna, Austria \\
lukas.parapatits@tuwien.ac.at

\end{document}